\newcommand\blfootnote[1]{%
  \begingroup
  \renewcommand\thefootnote{}\footnote{#1}%
  \addtocounter{footnote}{-1}%
  \endgroup
}
\def\dj{d\kern-0.4em\char"16\kern-0.1em}
\def\Dj{\mbox{\raise0.3ex\hbox{-}\kern-0.4em D}}
\def\be{\begin{equation}}
\def\ee{\end{equation}}
\def\t{\tau}
\def\s{\sigma}
\def\suml{\sum\limits}
\def\dss{\displaystyle}
\newcommand{\supp}{\operatorname{supp}}
 \def\D{\mathcal{D}}
 \def\E{\mathcal{E}}
 \def\Rd{\mathbb{R}^d}
\def\N{\mathbb{N}}
\newtheorem{te}{Theorem}
\numberwithin{te}{section}
\newtheorem{prop}[te]{Proposition}
\newtheorem{lema}[te]{Lemma}
\newtheorem{de}[te]{Definition}
\newtheorem{rem}[te]{Remark}
\begin{document}


\title{Band-limited wavelets beyond Gevrey regularity}
\author[Nenad Teofanov]{Nenad Teofanov$^*$}\blfootnote{$^*$ corresponding author}

\address{\hspace{-\parindent} Nenad Teofanov, Department of Mathematics and Informatics, Faculty of Sciences
	University of Novi Sad, Serbia}

\email{nenad.teofanov@dmi.uns.ac.rs}

\author[Filip Tomi\'c]{Filip Tomi\'c}

\address{\hspace{-\parindent} Filip Tomi\'c, Faculty of Technical Sciences,
University of Novi Sad, Serbia}

\email{filip.tomic@uns.ac.rs}

\author[Stefan Tuti\'c]{Stefan Tuti\'c}

\address{\hspace{-\parindent} Stefan Tuti\'c, Department of Mathematics and Informatics, Faculty of Sciences
	University of Novi Sad, Serbia}

\email{stefan.tutic@dmi.uns.ac.rs}

	\subjclass[2010]{42C40; 46E10; 46F05}
	\keywords{wavelets, Gevrey classes, Lambert function}


\maketitle

\begin{abstract}
It is known that a smooth function of exponential decay  at infinity can not be an orthonormal wavelet.
Dziuba\'nski and Hern\'andez constructed smooth orthonormal wavelets of Gevrey type subexponential decay.
We weaken  the Gevrey type decay and construct orthonormal wavelets of  subexponential decay
related to the so-called extended Gevrey classes. The virtue of our construction is that prescise asymptotics of  functions from such classes can be given in terms of the Lambert $W$ function.

\end{abstract}

%
%

\section{Introduction}

We complment the studies of orthonormal wavelets of subexponential decay given in \cite{Hernandez} and \cite{Moritoh}, by extending their classes of to smooth functions with subexponential decay given in terms of  the Lambert $W$ function (cf.  \cite{LambF, Mezo}).
Recall, a function $ \psi \in L^2 (\mathbb{R}) $
is called an {\em orthonormal wavelet} (ONW)
if the set
$$ \{\psi_{m, n} : m \in \mathbb{Z}, n \in \mathbb{Z}\}
$$
is an orthonormal basis of $ L^2 (\mathbb{R}),$ where
$$
\psi_{m, n}(x) = 2^{\frac{m}{2}} \psi(2^m x - n), \qquad m,n \in \mathbb{Z},\  x \in \mathbb{R}.
$$
Regularity and decay properties of the function $ \psi $ play central role in applications. The construction of an
orthonormal wavelet $\psi \in \mathcal{S} (\mathbb{R})$
(see Section \ref{Sec2} for the notation)
goes back to Y. Meyer, \cite{Mey, LM}.
However, if  $\psi \in C^\infty (\mathbb{R})$ with all derivatives bounded, and if
$\psi$ is of exponential decay at infinity, i.e.
\be
\label{Eq:exp-decay}
| \psi (x) | \leq C e^{-h |x|}, \qquad x \in \mathbb{R},
\ee
for some $h,C>0$, then $\psi$ can not be  an orthonormal wavelet \cite[Corollary 5.5.3]{Dau}.

\par

Our study complements the Dziuba\'nski and Hern\'andez construction of
orthonormal wavelets $ \psi $ with subexponential decay,
\be
\label{Eq:subexp-decay}
| \psi (x) | \leq  C e^{- |x|^{\frac{1}{\sigma}}}, \qquad x \in \mathbb{R}, \; \sigma > 1,
\ee
for some $ C= C(\sigma)>0$, \cite{Hernandez}.
The functions considered in  \cite{Hernandez} are related to the Gevrey type regularity.

By using Komatsu's approach to ultradistributions, Moritoh and Tomoeda  proved in \cite{Moritoh} that a similar construction to the one given in
\cite{Hernandez} may lead to the following  decay of $\psi $:
\be
\label{Eq:moritoh-decay}
| \psi (x) | \leq  C e^{-
|x|/l_{n,\sigma} (|x|)
}, \qquad |x| \;\; \text{large enough},  \; \sigma>1,
\ee
for any $n = 1,2,\dots$, where
$$
l_{n,\sigma} (|x|) = (\log_1 |x|)(\log_2 |x|) \dots (\log_{n-1} |x|)(\log_n |x|)^\sigma,
$$
$\log_1 |x|=\log |x|$, $|x| >0$.

\par

The decay in  \eqref{Eq:moritoh-decay} is closer to exponential decay \eqref{Eq:exp-decay} than
the subexponential decay given by \eqref{Eq:subexp-decay}. We extend the class of  ONW considered in \cite{Hernandez} and \cite{Moritoh}, and obtain an estimate of the form
\begin{multline}
\label{Eq:Lambert-decay}
| \psi (x) | \leq C
 e^{ -h \left ( \ln^{2}(|x|) / W  (\ln |x|) \right ) } \\
\asymp
 e^{ -h \left ( \ln^{2}(|x|) / \ln  (\ln |x|) \right ) }
, \qquad  |x| \;\; \text{large enough},
\end{multline}
for every $h>0$ and some $ C= C(h)>0$,
(cf. Theorem  \ref{Th:first-estimate}  for a more general result).
Here  $W$ denotes the Lambert function. We note that
\eqref{Eq:Lambert-decay} is less restrictive than \eqref{Eq:subexp-decay}.

\par

The  Lambert $W$ function is defined as the inverse of $z e^{z}$, $z\in \mathbb{C}$,
and we consider here $ W(x)$ the restriction of its principal branch to $ [0,\infty)$.
It is a convenient tool for the description of asymptotic behavior in different contexts.
We refer to \cite{LambF} for a review of some applications of
the Lambert $W$ function in pure and applied mathematics, and to recent monograph
\cite{Mezo} for more details and generalizations.
Let us briefly and incompletely mention its use in different research fields, such as
epidemiology \cite{BDG}, cosmology \cite{Yang}, ecology \cite{PSSMA}, and numerical analysis \cite{Roos}.

\par

Our construction is based on the  properties of  extended Gevrey classes
$ {\E}_{\t,\s}({\mathbb R})$, $\t>0$, $\s>1$, see Subsection \ref{subsec:extendedGevrey} for the definition.
A surprising feature of $ {\E}_{\t,\s}({\mathbb R})$ is that the precise asymptotics of the corresponding associated function
$ T_{\tau, \sigma}$ (see \eqref{asocirana}), is related to the Lambert $W$ function, \cite{PTT-03}.
This is recently used to prove the surjectivity of the Borel map in sectors of the complex plane for
the certain ultraholomorphic classes \cite{Javier02}, and in the study  of ultradifferentiable functions ''beyond geometric growth factors'' \cite{Javier01}. The classes
$ {\E}_{\t,\s}({\mathbb R})$ are closely related to the general construction of ultradifferentiable functions defined via weight matrices, \cite{RS, Rainer, TT0}. We also note that  $ {\E}_{1,2}({\mathbb R})$ provides well-posedness
for certain systems of strictly hyperbolic PDE's, see  \cite{CL}.

\par

We consider wavelets  $\psi $ whose Fourier transforms $\widehat \psi $
are compactly supported functions from ${\E}_{\t,\s}({\mathbb R})$.
Recall, if $ \hat f $ has compact support
then $f $ is called a band-limited function, and if $\psi $ is a band-limited ONW
such that  $ \hat \psi \in C^\infty (\mathbb{R})$, then by
the Paley-Wiener theorem it follows that $\psi$ is analytic function.
To study non band-limited ONW $\psi \in C^\infty (\mathbb{R})$ which are less regular than any Gevrey function
a different approach should be used, see e.g. \cite{Fukuda}.

%
%

\par

In the same way as it is done in  \cite{Hernandez, Moritoh}, an example of the orthonormal wavelet $\psi$ is given by the explicit construction of  $\widehat \psi $. Moreover, by the method used in the construction $\widehat \psi $ does not belong to any Gevrey class, see Remark \ref{rem-sharp}. Consequently, $\psi$ does not satisfy \eqref{Eq:subexp-decay} (and therefore \eqref{Eq:moritoh-decay}), which gives an extension of ONW of  subexponential decay observed in the literature so far.

\par

The paper is organized as follows.
In Section \ref{Sec2} we collect necessary background information to set the stage for main results that are
collected in Section \ref{sec3}. To prove the main Theorem \ref{Th:first-estimate},
we use the construction of wavelets (Lemma \ref{Prop:F-of-psi}) and the Paley-Wiener type theorem
(Proposition \ref{NASPW}).
In addition to the decay of orthonormal wavelet $\psi$ given in Theorem  \ref{Th:first-estimate} \emph{i)},
we present a more precise estimate in  Theorem \ref{Th:first-estimate} \emph{ii)},
which clearly reveals ultra-analyticity of $\psi$. This type of ultra-analytic estimates is not considered in
\cite{Hernandez} and \cite{Moritoh}.


\section{Preliminaries} \label{Sec2}

The main goal of this section is to introduce functions with extended Gevrey regularity. As a preparation we first
set the notation and then introduce the Lambert function, defining sequences and
associated functions.

\subsection{Notation}
We use the standard notation: $\mathbb{N}^d$, $\mathbb{R}^d$, $\mathbb{C}^d$, denote the sets of $d-$dimensional positive integers, real numbers, and complex numbers, respectively. When $d=1$ we omit the upper index, and $\mathbb{N}_0 = \mathbb{N} \cup \{0\}$. The length of a multi-index $\alpha \in \mathbb{N}_0 ^d$ is denoted by $ |\alpha| = \alpha_1 + \alpha_2 + \dots + \alpha_d$.
We write $ L^p (\mathbb{R}^d) $, $ 1\leq p\leq \infty$,  for the Lebesgue spaces, and  $ \mathcal{S} (\mathbb{R}^d)$ denotes the Schwartz space of infinitely smooth ($ C^\infty (\mathbb{R}^d)$) functions which, together with their derivatives, decay at infinity faster than any inverse polynomial. $f = O (g) $ means that $ |f(x)| \leq C |g(x)| $ for some $C>0$ and $x$ in the intersection of domains for $f$ and $g$.
If $f=O(g)$ and $g=O(f)$, then we write $f\asymp g$.
The Fourier transform of $f \in L^1 (\mathbb{R}^d)$ given by
$$
\widehat{f} (\xi) := \int_{\mathbb{R}^d} f(x) e^{ i x \cdot \xi} dx, \qquad \xi \in \mathbb{R}^d,
$$ 
extends to $ L^2 (\mathbb{R}^d)$ by standard approximation procedure.

\subsection{The Lambert function} \label{subsec:lambert}
The \emph{Lambert $W$ function} is defined as the inverse of $z e^{z}$, $z\in {\mathbb C}$. By $W(x)$, $x\geq 0$, we denote the restriction of its principal branch, and we review some of its basic properties as follows:
\begin{itemize}
\item[$(W1) \quad$] $W(0)=0$, $W(e)=1$, $W(x)$ is continuous, increasing and concave on $[0,\infty)$,
\vspace{0.2cm}
\item[$(W2) \quad$] $W(x e^{x})=x, $ and $ \quad x=W(x)e^{W(x)}$,  $x\geq 0$,
\vspace{0.2cm}
\item[$(W3) \quad $] $W$ can be represented in the form of the absolutely convergent series
$$
W(x)=\ln x-\ln (\ln x)+\sum_{k=0}^{\infty}\sum_{m=1}^{\infty}c_{km}\frac{(\ln(\ln x))^m}{(\ln x)^{k+m}},\quad x\geq x_0>e,
$$
with suitable constants $c_{km}$ and  $x_0 $, wherefrom  the following  estimates hold:
\be
\label{sharpestimateLambert}
\ln x -\ln(\ln x)\leq W(x)\leq \ln x-\frac{1}{2}\ln (\ln x), \quad x\geq e.
\ee
\end{itemize}

Note that $(W2)$ implies
$$
W(x\ln x)=\ln x,\quad x>1.
$$
By using $(W3)$ we obtain
$$
W(x)\sim \ln x, \quad x\to \infty,
$$
and therefore
$$
W(C x)\sim W(x),\quad x\to \infty,
$$
for any $C>0$.
We refer to \cite{HoHa, LambF, Mezo} for more details about the Lambert $W$ function.

\subsection{Defining sequences} \label{sequences}
The well known Komatsu's approach to ultradistributions, see \cite{Komatsuultra1},is based on defining sequences
$M_p$, $ p \in \N$, which satisfy some of the conditions:

\begin{itemize}
	\item[$(M.1) \quad$] $\dss (M_p)^2\leq M_{p-1} M_{p+1} $, $ \quad p\in \N$,\\
	\item[$(M.2) \quad$]
	$ (\exists A,C>0) \quad  M_{p}\leq A C^{p+q} \;  M_p M_{q}, \quad p,q\in \N_0,$ \\
	\item[$(M.2)' \quad$]
	$ (\exists A,C>0) \quad M_{p+1} \leq A C^{p}M_p,  \quad p\in \N_0.$
	\item[$(M.3)'\quad$]
	$\displaystyle \suml_{p=1}^{\infty}\frac{M_{p-1}}{M_p} <\infty.$
\end{itemize}

When $ \t > 1 $, the Gevrey sequence $M_p = p!^{\t}$, $ p \in \N$, is an example of of a sequence
which satisfy all of the above mentioned conditions.

We extend Gevrey sequences by introducing an additional parameter $\s $, and consider
sequences of the form $M^{\t,\s}_p=p^{\t p^\s}$,
$ p \in \N$, $M^{\t,\s}_0 = 1$, $\t>0$, $\s \geq 1$.

It can be proved (see e.g. \cite{PTT-01}) that such sequences satisfy
the following properties:

\begin{itemize}
\item[$(M.1) \quad$] $\dss ({M_p^{\t,\s}})^2\leq {M_{p-1}^{\t,\s}}{M_{p+1}^{\t,\s}}$, $ \quad p\in \N$,\\

\item[$\widetilde{(M.2)'}\quad$]  $(\exists C>0)\quad M_{p+1}^{\t,\s}\leq C^{p^{\s}}M_p^{\t,\s}$,
$\quad p\in \N_0$,\\

\item[$\widetilde{(M.2)}\quad$]  $ (\exists C>0)$\quad $M_{p+q}^{\t,\s}\leq C^{p^{\s}+q^{\s}}M_p^{2^{\s-1}\t,\s}M_q^{2^{\s-1}\t,\s},\quad p,q\in \N_0,$ \\

\item[$(M.3)'\quad$] If $\s > 1$, or if $\s = 1$ and $\t>1 $, then
$\displaystyle \suml_{p=1}^{\infty}\frac{M_{p-1}^{\t,\s}}{M_p^{\t,\s}} <\infty.$
\end{itemize}

In fact, $(M.3)'$ follows from the estimate
\be
\label{NizOcena}
\frac{M_{p-1}^{\t,\s}}{M_p^{\t,\s}}\leq \frac{1}{(2p)^{\tau (p-1)^{\s-1}}},\qquad p\in \N.
\ee

Notice that $(M.2) \Rightarrow (M.2)'$.
When $ \s = 1$ i.e.  $M^{\t,1}_p=p^{\t p} \asymp p! ^\t$, $\t>0$, we obtain the classical Gevrey sequences. Then $\widetilde{(M.2)}$ and $\widetilde{(M.2)'}$ reduce to  $(M.2)$ and $(M.2)'$, respectively, and  $(M.3)'$ holds if and only if $\t>1$.

\par

\begin{rem}
The estimate $\widetilde{(M.2)}$ is sharp in the sense that ${M_p^{\t,\s}}$ does not satisfy $(M.2)$ for any choice of $\t>0$ and $\s>1$. More precisely, $\widetilde{(M.2)'}$ holds for $C>1$ and therefore ${M_p^{\t,\s}}$ fails to satisfy a weaker condition $(M.2)'$ (see \cite{PTT-01}). This implies that classes of smooth functions generated by ${M_p^{\t,\s}}$, see Subsection \ref{subsec:extendedGevrey},  are not ultradifferentiable in the sense of Komatsu, \cite{Komatsuultra1}. However, it is recently proved that such classes fit well to the theory of \emph{weight matrix spaces} (see \cite{TT0, Rainer, RS}).
\end{rem}

\subsection{Associated functions} \label{subsec:weights}

The associated function $T_{\t,\s}$ to the sequence $ M_p^{\t,\s} = p^{\t p^\s}$, $M^{\t,\s}_0 = 1$, $\t>0$, $\s>1$, is defined by
\be \label{asocirana}
\dss T_{\t,\s}(k)=\sup_{p\in \N_0}\ln_+\frac{k^{p}}{M_p^{\t,\s}},\quad \;\; k>0,
\ee
where $\ln_+ x=\max\{0,\ln x\}$, $x>0$.

It turns out that the precise asymptotic behavior of $T_{\t,\s}$ an infinity can be expressed via the Lambert W function as follows.
\begin{prop}
\label{TeoremaAsocirana}
Let $\t>0$, $\s>1$,  $M^{\t,\s}_p=p^{\t p^\s}$, $ p = 1,2, \dots$, let $ T_{\t,\s}$ be the
associated function to the sequence ${M_p^{\t,\s}}$, and put
\be
 T_{\s}(k) :=  \frac{\ln ^{\frac{\s}{\s-1} }k }{W ^{\frac{1}{\s-1}}(\ln  k)}, \quad
 k \; \text{large enough}.
\ee
Then the following estimates hold:
\be
\label{KonacnaocenaAsocirana}
B_\s\,\t^{-\frac{1}{\s-1}}T_{\s}(k) +\widetilde{B}_{\t,\s}
\leq T_{\t,\s}(k)
\leq A_{\s}\,\t^{-\frac{1}{\s-1}} T_{\s}(k) + \widetilde{A}_{\t,\s},\,\,
\ee
for large enough $k>0$, and suitable constants $A_{\s}, B_{\s}, \widetilde{A}_{\t,\s},\widetilde{B}_{\t,\s}>0$.
\end{prop}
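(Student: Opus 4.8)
The plan is to evaluate the supremum in \eqref{asocirana} by passing to a continuous optimization and locating its maximizer with the Lambert function. Writing $\ln_+\frac{k^p}{M_p^{\t,\s}}=p\ln k-\t p^\s\ln p$ for $p\geq 1$, I would first replace the discrete parameter $p$ by a real variable $x>0$ and study
\[
h(x)=x\ln k-\t x^\s\ln x,\qquad x>0.
\]
For fixed large $k$ this function satisfies $h(0^+)=0$, tends to $-\infty$ as $x\to\infty$, and is concave on $(1,\infty)$ (indeed $h''(x)=-\t x^{\s-2}[(\s-1)(\s\ln x+1)+\s]<0$), so it attains a global maximum at the unique critical point $x_0=x_0(k)$ solving $h'(x_0)=0$, that is
\[
\ln k=\t\,x_0^{\s-1}\bigl(\s\ln x_0+1\bigr).
\]
The whole argument hinges on inverting this transcendental relation asymptotically.

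For the inversion I would substitute $u=x_0^{\s-1}$, so that $\ln x_0=\frac{1}{\s-1}\ln u$ and, after discarding the lower-order summand $1$ against $\s\ln x_0$, the defining equation becomes $u\ln u\sim\frac{\s-1}{\s\t}\ln k$. This is exactly the situation governed by the identity $W(x\ln x)=\ln x$ recorded after $(W2)$: setting $v=\frac{\s-1}{\s\t}\ln k$ gives $\ln u=W(v)$ and hence $u=v/W(v)$, i.e. $x_0^{\s-1}=\frac{(\s-1)\ln k}{\s\t\,W(v)}$. Substituting the critical relation back into $h$ yields the compact form $h(x_0)=\t\,x_0^\s\bigl((\s-1)\ln x_0+1\bigr)=\t\,x_0^\s\bigl(W(v)+1\bigr)$, and writing $x_0^\s=(x_0^{\s-1})^{\s/(\s-1)}=(v/W(v))^{\s/(\s-1)}$ leads, after cancellation, to
\[
h(x_0)\sim\t\,\Bigl(\tfrac{\s-1}{\s\t}\Bigr)^{\frac{\s}{\s-1}}\frac{(\ln k)^{\frac{\s}{\s-1}}}{W(v)^{\frac{1}{\s-1}}}.
\]
Using $W(Cx)\sim W(x)$ to replace $W(v)$ by $W(\ln k)$ and simplifying the powers of $\t$ (note $1-\frac{\s}{\s-1}=-\frac{1}{\s-1}$) identifies the leading term as $\t^{-\frac{1}{\s-1}}\bigl(\tfrac{\s-1}{\s}\bigr)^{\frac{\s}{\s-1}}T_\s(k)$, which is precisely the claimed order of growth.

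To turn these asymptotics into the rigorous two-sided estimate \eqref{KonacnaocenaAsocirana}, I would prove the upper bound by noting $T_{\t,\s}(k)=\sup_p h(p)\leq\max_{x>0}h(x)=h(x_0)$ and bounding $h(x_0)$ from above by $A_\s\,\t^{-1/(\s-1)}T_\s(k)+\widetilde A_{\t,\s}$; for the lower bound I would evaluate $h$ at the admissible integer $p_0=\lf x_0\rf$ and show that the discretization costs only a bounded amount, giving $T_{\t,\s}(k)\geq h(p_0)\geq B_\s\,\t^{-1/(\s-1)}T_\s(k)+\widetilde B_{\t,\s}$. The main obstacle is the error bookkeeping: the replacements of $\s\ln x_0+1$ by $\s\ln x_0$, of $W(v)+1$ by $W(v)$, and of $W(v)$ by $W(\ln k)$ each produce corrections of relative size $O(1/W(\ln k))$, hence of order $o(T_\s(k))$ but not $O(1)$. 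These cannot be hidden in the additive constants directly; instead they are absorbed, for large $k$, into the gap between the two $\s$-dependent leading constants $B_\s<\bigl(\tfrac{\s-1}{\s}\bigr)^{\s/(\s-1)}<A_\s$, while the bounded range of $k$ together with the $\ln_+$ truncation for small $k$ is what $\widetilde A_{\t,\s}$ and $\widetilde B_{\t,\s}$ account for. Throughout, the quantitative two-sided control on $W$ in \eqref{sharpestimateLambert} from $(W3)$ is what makes each of these $o$-terms explicit.
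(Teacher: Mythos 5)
Your argument is essentially correct, but it is not the paper's argument: the paper gives no self-contained proof of Proposition \ref{TeoremaAsocirana} at all, deferring entirely to \cite[Proposition 2]{TT0} and the estimates (30) there. Your direct route --- replacing the discrete supremum in \eqref{asocirana} by the continuous maximization of $h(x)=x\ln k-\t x^{\s}\ln x$, locating the critical point via $\ln k=\t x_0^{\s-1}(\s\ln x_0+1)$, and inverting with the identity $W(x\ln x)=\ln x$ --- is sound, and it buys something the citation hides: the explicit leading constant $\bigl(\tfrac{\s-1}{\s}\bigr)^{\s/(\s-1)}$ sitting between $B_\s$ and $A_\s$. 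Your key structural observation is also the right one: the corrections of relative size $O(1/W(\ln k))$ are $o(T_\s(k))$ but not $O(1)$, so they must be absorbed into the multiplicative gap $B_\s<\bigl(\tfrac{\s-1}{\s}\bigr)^{\s/(\s-1)}<A_\s$ rather than into $\widetilde{A}_{\t,\s},\widetilde{B}_{\t,\s}$; this is legitimate because the threshold ``$k$ large enough'' in \eqref{KonacnaocenaAsocirana} may depend on $\t$ and $\s$. Two points need repair, neither fatal. First, your claim that passing from $x_0$ to $p_0=\lf x_0\rf$ ``costs only a bounded amount'' is false when $\s>2$: by concavity and the mean value theorem the loss is at most $h'(x_0-1)$, which is of order $\t\s(\s-1)x_0^{\s-2}\ln x_0$ and hence unbounded in $k$; however it is $O\bigl(h(x_0)/x_0^{2}\bigr)=o(T_\s(k))$, so it is absorbed by exactly the same gap mechanism, and the claim should be stated that way. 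Second, the inversion step applies $W(x\ln x)=\ln x$ to an asymptotic, not exact, relation; you should note that $u\ln u=v(1+o(1))$ yields $u=(v/W(v))(1+o(1))$ by monotonicity of $x\mapsto x\ln x$, with \eqref{sharpestimateLambert} making the error quantitative. With these adjustments your proof stands as a complete, self-contained alternative to the paper's citation.
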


The proof follows from \cite[Proposition 2]{TT0}, and estimates (30) given there. We note that \eqref{KonacnaocenaAsocirana} implies
\be
\label{AsociranaSigma}
 T_{\t,\s}(k)\asymp \tau^{-\frac{1}{\s-1}} \frac{\ln ^{\frac{\s}{\s-1} }k }{W ^{\frac{1}{\s-1}}(\ln  k)}
 =\tau^{-\frac{1}{\s-1}} T_{\s}(k), \quad
 k \; \text{large enough},
\ee
where the hidden constants depend on $\s$ only.

\par

We note that since $T_{\t,\s}$ is an increasing function, \eqref{AsociranaSigma} implies that there exists $A>0$ such that
\be
\label{AlmostIncreasing}
\displaystyle T_{\s}(k)\leq A\, T_{\s}(k+a),\quad a>0.
\ee

\subsection{Extended Gevrey classes} \label{subsec:extendedGevrey}
Although in Section \ref{sec3} we are interested in one-dimensional wavelets, here we decided to consider functions over $\mathbb{R}^d$.

Recall that the Gevrey space ${\mathcal G}_t ( \mathbb{R}^d)$, $ t >1$,
consists of functions $  \phi \in C^\infty ( \mathbb{R}^d)$ such that
such that for every compact set   $K\subset \subset \Rd$ there are constants $C_K > 0 $ and $h>0$ satisfying
\be \label{Eq:Gevrey-class}
|\partial^{\alpha} \phi (x)| \leq  C_K h^{|\alpha|} |\alpha|!^t.
\ee
for all $x \in K $ and for any $\alpha \in \N_0 ^d$.

In a ismilar fashion we introduce classes related to defining sequences $M^{\t,\s}_p=p^{\t p^\s}$, $\tau>0$, $\s>1$.

\begin{de} \label{def:extendedGervey} Let there  be given $\tau>0$, $\s>1$, and let $M^{\t,\s}_p=p^{\t p^\s}$, $ p = \mathbb{N} $, $M^{\t,\s}_0 = 1$.

The extended Gevrey class of Roumieu type ${\E}_{\{\t, \s\}}( \mathbb{R}^d)$ is the set of all
$\phi \in  C^{\infty}( \mathbb{R}^d)$ such that for every compact set   $K\subset \subset \Rd$ there are constants $C_K > 0 $ and $h>0$ satisfying
\begin{equation} \label{NewClassesInd}
|\partial^{\alpha} \phi (x)| \leq C_K h^{|\alpha|^{\s}}  M_{|\alpha|} ^{\t,\s},
\end{equation}
for all $x \in K $ and for all $ \alpha \in  \mathbb{N}_0 ^d.$

The extended Gevrey class of Beurling type ${\E}_{(\t, \s)}( \mathbb{R}^d)$ is the set of all
$\phi \in  C^{\infty}( \mathbb{R}^d)$ such that
for every compact set   $K\subset \subset \Rd$ and for all $h>0$ there is a constant $C_{K,h} > 0 $  satisfying
\begin{equation} \label{NewClassesProj}
|\partial^{\alpha} \phi (x)| \leq C_{K,h} h^{|\alpha|^{\s}}  M_{|\alpha|} ^{\t,\s},
\end{equation}
for all $x \in K $ and for all $ \alpha \in  \mathbb{N}_0 ^d.$
\end{de}

We refer to \cite{PTT-01} for the topologies in ${\E}_{\{\t, \s\}}( \mathbb{R}^d)$ and ${\E}_{(\t, \s)}( \mathbb{R}^d)$.
Note that   \eqref{NewClassesInd}  and \eqref{NewClassesProj} imply
${\E}_{(\t, \s)}( \mathbb{R}^d) \hookrightarrow {\E}_{\{\t, \s\}}( \mathbb{R}^d)$,
where $\hookrightarrow$ denotes the continuous and dense inclusion.

We use abbreviated notation $ \t,\s $ for $\{\t,\s\}$ or $(\t,\s)$.

The set of functions $ \phi \in {\E}_{\t,\s}( \mathbb{R}^d)$
whose support is contained in some compact set is denoted by  ${\D}_{\t, \s} ( \mathbb{R}^d)$.

We also introduce
\be
\label{KlasePreseci}
{\E}_{\s}( \mathbb{R}^d)=\bigcap_{\t>0}{\E}_{\t,\s}( \mathbb{R}^d)
\quad \text{and }\quad  {\D}_{\s}( \mathbb{R}^d)=\bigcap_{\t>0}{\D}_{\t,\s}( \mathbb{R}^d),
\ee
endowed with projective limit topologies.

\par

If $ \t > 1 $ and $\s = 1$, then we have $  M_{|\alpha|} ^{\t,1} = |\alpha|^{\t |\alpha|} \asymp  |\alpha|!^t$, so that
$$
{\E}_{\{\t, 1\}}(\mathbb{R}^d)={\E}_{\{\t\}}(\mathbb{R}^d) = {\mathcal G}_{\t} (\mathbb{R}^d),
$$
and $\D_{\{\t,1\}}(\mathbb{R}^d)=\D_{\{\t\}}(\mathbb{R}^d)$ is its subspace of compactly supported functions in $\E_{\{\t\}}(\mathbb{R}^d)$.

\par

By the definition it follows that
\begin{equation*}
\label{Theta_S_embedd-2}
{\E}_{\tau_0, {\s_1}}(\mathbb{R}^d)\hookrightarrow \cap_{\tau> \tau_0} {\E}_{\tau, {\s_1}}(\mathbb{R}^d)
\hookrightarrow {\E}_{\tau_0, {\s_2}}(\mathbb{R}^d),
\end{equation*}
for any $\tau_0>0$ whenever $\s_2>\s_1\geq 1$. In particular,
\begin{equation*}
\label{GevreyNewclass}
\cup_{t>1}  {\mathcal G}_t (\mathbb{R}^d) \hookrightarrow {\E}_{\tau, \s}(\mathbb{R}^d)
\hookrightarrow  C^{\infty}(\mathbb{R}^d), \;\;\; \tau>0, \; \s>1,
\end{equation*}
so that  the regularity in ${\E}_{\tau, \s}(\mathbb{R}^d)$ can be thought of as an \emph{extended Gevrey regularity}.

\par

If $0<\t\leq 1$, then $ {\E}_{\t, 1}(\mathbb{R}^d)$ consists of quasianalytic functions. In particular, $\dss
\D_{\t,1}(\mathbb{R}^d)=\{0\}$ when $0<\t\leq 1$, and ${\E}_{\{1, 1\}}(\mathbb{R}^d)= {\E}_{\{1\}}(\mathbb{R}^d)$ is the set of analytic functions on $\mathbb{R}^d$.

\par

The non-quasianalyticity condition $(M.3)'$ provides the existence of partitions of unity in  $\E_{\t,\s}(\mathbb{R}^d)$ which can be formulated as follows.

\begin{lema} [\cite{PTT-01}] \label{teoremaKompaktannosac}
Let $\t>0$ and $\s>1$. Then there exists a compactly supported function $\phi\in {\E_{\t,\s}}(\mathbb{R}^d)$ such that $0\leq\phi\leq 1$ and $\int_{\Rd}\phi\,dx=1$.
\end{lema}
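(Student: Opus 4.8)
The plan is to realize $\phi$ as a tensor product of a single one-dimensional bump, and to build that bump as an infinite convolution of normalized indicator functions whose half-widths are governed by the ratios $M_{p-1}^{\t,\s}/M_p^{\t,\s}$. The decisive structural input is the non-quasianalyticity condition $(M.3)'$, which holds precisely because $\s>1$ (via \eqref{NizOcena}) and guarantees $\sum_{p\ge 1}M_{p-1}^{\t,\s}/M_p^{\t,\s}<\infty$; this summability is exactly what forces the construction to be compactly supported, while the derivative bounds for the extended Gevrey class will come from telescoping the widths.

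First I would treat $d=1$. Fix a scale $c\ge 1$, set $\ell_p=c\,M_{p-1}^{\t,\s}/M_p^{\t,\s}$ for $p\ge1$, and let $u_0=\chi_{[-1/2,1/2]}$ and $u_p(x)=(2\ell_p)^{-1}\chi_{[-\ell_p,\ell_p]}(x)$, each a probability density. Define $g=u_0*u_1*u_2*\cdots$. To justify convergence and locate the support I would pass to the Fourier side, where $\widehat g(\xi)=\widehat{u_0}(\xi)\prod_{p\ge1}\frac{\sin(\ell_p\xi)}{\ell_p\xi}$; since $\sum_p\ell_p<\infty$ the product converges locally uniformly to an entire function of exponential type $R:=\tfrac12+\sum_p\ell_p<\infty$, so by the Paley--Wiener theorem $g$ is supported in $[-R,R]$. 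As a convolution of nonnegative densities, $g\ge 0$ and $\int_{\mR}g\,dx=\prod_p\int u_p\,dx=1$; writing $g=u_0*\mu$ with $\mu=u_1*u_2*\cdots$ a probability measure yields $0\le g\le\|u_0\|_\infty=1$.

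The heart of the argument is the derivative estimate. To bound $g^{(N)}$ I would place one derivative on each of $u_1,\dots,u_N$, leaving $u_0$ and the tail undifferentiated:
\[
g^{(N)}=u_0*(\partial u_1)*\cdots*(\partial u_N)*u_{N+1}*u_{N+2}*\cdots ,
\]
where $\partial u_p=(2\ell_p)^{-1}(\delta_{-\ell_p}-\delta_{\ell_p})$ has total variation $1/\ell_p$. Using $\|\nu_1*\cdots*\nu_N*f\|_\infty\le\|f\|_\infty\prod_i\|\nu_i\|_{\rm TV}$ together with the fact that the tail $u_0*u_{N+1}*\cdots$ is $u_0$ convolved with a probability measure (hence bounded by $\|u_0\|_\infty=1$), the widths telescope:
\[
\|g^{(N)}\|_\infty\le\prod_{p=1}^N\frac1{\ell_p}=c^{-N}\prod_{p=1}^N\frac{M_p^{\t,\s}}{M_{p-1}^{\t,\s}}=c^{-N}M_N^{\t,\s}.
\]
Keeping $u_0$ in the tail is what lets me avoid invoking $\widetilde{(M.2)'}$ (no leftover factor $M_{N+1}^{\t,\s}/M_N^{\t,\s}$ appears). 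Since $c\ge1$ and $N\le N^{\s}$ for $\s>1$, the bound is dominated by $h^{N^{\s}}M_N^{\t,\s}$ with $h=1$, which is exactly the Roumieu estimate \eqref{NewClassesInd}; thus $g\in\E_{\t,\s}(\mR)$ with compact support, and enlarging $c$ only widens the support, so the scale can be tuned freely.

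Finally I would pass to $\Rd$ by the tensor product $\phi(x)=\prod_{j=1}^d g(x_j)$, which is compactly supported, satisfies $0\le\phi\le1$, and has $\int_{\Rd}\phi\,dx=\big(\int_{\mR}g\big)^d=1$. For derivatives, $\partial^{\a}\phi(x)=\prod_j g^{(\a_j)}(x_j)$ gives $|\partial^{\a}\phi|\le\prod_j M_{\a_j}^{\t,\s}$, and the two elementary inequalities $\sum_j\a_j^{\s}\le|\a|^{\s}$ and $\prod_j\a_j^{\t\a_j^{\s}}\le|\a|^{\t|\a|^{\s}}=M_{|\a|}^{\t,\s}$ (both valid for $\s\ge1$, since $\a_j\le|\a|$) upgrade this to the required $d$-dimensional bound. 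I expect the main obstacle to be the derivative estimate: making the ``one derivative per factor'' bookkeeping and the total-variation/sup-norm convolution inequality rigorous, and verifying that the telescoped bound $c^{-N}M_N^{\t,\s}$ genuinely fits the extended Gevrey form. This is exactly the step where the relaxation from $h^{N}$ to $h^{N^{\s}}$ (equivalently, the hypothesis $\s>1$) is essential, as the polynomial-in-$N$ factor $c^{-N}$ must be absorbed into $h^{N^{\s}}$ rather than into the classical $h^{N}$.
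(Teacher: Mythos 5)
Your construction is correct in substance and lands in the same family as the paper's (the paper itself defers this lemma to \cite{PTT-01}; its in-text analogue is Lemma \ref{NasaCutoff}): both are infinite convolutions whose convergence is powered by the non-quasianalyticity sum in $(M.3)'$/\eqref{NizOcena}. The execution, however, genuinely differs. You convolve normalized characteristic functions in H\"ormander's classical style, with half-widths chosen to be exactly the telescoping ratios $\ell_p=c\,M_{p-1}^{\tau,\sigma}/M_p^{\tau,\sigma}$, so the derivative bound collapses to $\prod_{p\le N}\ell_p^{-1}=c^{-N}M_N^{\tau,\sigma}$ with no appeal to $\widetilde{(M.2)'}$; keeping $u_0$ in the undifferentiated tail is precisely what kills the leftover-factor issue. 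The paper instead convolves scaled copies of a fixed smooth bump $f$ and chooses the widths in blocks, $a_p=(2(p+1))^{-\frac{1}{m}p^{\sigma-1}}$ for $N_m\le p<N_{m+1}$, so that the effective parameter $\tau_m=1/m$ tends to $0$. What that heavier bookkeeping buys is the stronger conclusion $\phi\in\mathcal{D}_{\sigma}=\cap_{\tau>0}\mathcal{D}_{\tau,\sigma}$ (membership in every $\tau$-class simultaneously), which is what Section \ref{sec3} actually needs; what your version buys is a shorter, fully elementary proof of the fixed-$\tau$ statement, plus an explicit tensorization to $\mathbb{R}^d$ (your inequalities $\sum_j\alpha_j^{\sigma}\le|\alpha|^{\sigma}$ and $\prod_j\alpha_j^{\tau\alpha_j^{\sigma}}\le|\alpha|^{\tau|\alpha|^{\sigma}}$ are valid for $\sigma\ge1$), which is genuinely needed here since the lemma is $d$-dimensional while Lemma \ref{NasaCutoff} is one-dimensional.

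Two repairable points. First, under the paper's convention $\mathcal{E}_{\tau,\sigma}$ abbreviates either $\mathcal{E}_{\{\tau,\sigma\}}$ or $\mathcal{E}_{(\tau,\sigma)}$, and your bound $c^{-N}M_N^{\tau,\sigma}$ only yields the Roumieu case: for $h<1$ one has $c^{-N}h^{-N^{\sigma}}=\exp\bigl(N^{\sigma}\ln(1/h)-N\ln c\bigr)\to\infty$, so \eqref{NewClassesProj} fails for the same $\tau$. The fix is one line: run the construction with $\tau/2$ in the widths; then $M_N^{\tau/2,\sigma}\le C_h\,h^{N^{\sigma}}M_N^{\tau,\sigma}$ for every $h>0$ because $\exp\bigl(N^{\sigma}(\ln(1/h)-\tfrac{\tau}{2}\ln N)\bigr)$ is bounded in $N$. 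Second, the smoothness of the infinite convolution deserves one explicit estimate: finite convolutions of boxes are only finitely differentiable, so you should record that for each fixed $k$ the partial convolutions satisfy
\begin{equation*}
\bigl\|g_{N+1}^{(k)}-g_N^{(k)}\bigr\|_\infty\le\bigl\|g_N^{(k+1)}\bigr\|_\infty\,\ell_{N+1}\le\Bigl(\prod_{p=1}^{k+1}\ell_p^{-1}\Bigr)\ell_{N+1},
\end{equation*}
which is summable in $N$ by $(M.3)'$; hence each $g_N^{(k)}$ converges uniformly, $g\in C^{\infty}$, and your sup-norm bounds pass to the limit. With these two additions your argument is complete.
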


Of course, any compactly supported Gevrey function from $\D_{\{\t\}}(\mathbb{R}^d)$ belongs to $ {\D}_{\{\t, \s\}}(\mathbb{R}^d)$ as well.
However, the proof of Lemma \ref{teoremaKompaktannosac} from \cite{PTT-01} provides a compactly supported function in ${\D}_{\{\t, \s\}}(\mathbb{R}^d)$  which
does not belong to   ${\D}_{\{t\}}(\mathbb{R}^d)$, for any $t>1$. We will revisit the construction in Section \ref{sec3}.


We conclude the section with some properties which will be used in Section \ref{sec3}.

\begin{prop}
\label{OsobineKlasa}
Let there be given $\tau>0$, $\s>1$, $M^{\t,\s}_p=p^{\t p^\s}$, $ p = \mathbb{N}$,
$M^{\t,\s}_0= 1$,
and let ${\E}_{\t,\s}(\mathbb{R}^d) $ be given by \eqref{NewClassesInd} or \eqref{NewClassesProj}.
Then the following is true.
\begin{itemize}
\item[$i)$] ${\E}_{\t,\s}(\mathbb{R}^d) $ is closed under the {pointwise} multiplication.
\item[$ii)$] ${\E}_{\t,\s}(\mathbb{R}^d) $ is closed under finite order derivation.
\item[$iii)$]  $\E_{\t,\s}(\mathbb{R}^d)$ is closed under superposition. More precisely, if $F(x)$ is an entire function and $f(x)\in \E_{\t,\s}(\mathbb{R}^d)$ then $F(f(x))\in \E_{\t,\s}(\mathbb{R}^d)$.
\item[$iv)$] $\E_{\t,\s}(\mathbb{R}^d)$ is invariant under translations and dilations.
\end{itemize}
\end{prop}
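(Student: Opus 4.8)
The plan is to establish each of the four closure properties by returning to the defining inequalities \eqref{NewClassesInd} and \eqref{NewClassesProj} and controlling the derivatives of the constructed function via the structural estimates $\widetilde{(M.2)}$ and $\widetilde{(M.2)'}$ for the sequence $M_p^{\t,\s}$. Throughout I would work locally on a fixed compact set $K\subset\subset\Rd$, since all classes are defined by local estimates, and I would treat the Roumieu case in detail; the Beurling case follows by the same computations after the quantifiers on $h$ are rearranged.

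For \emph{ii)}, closure under derivation is essentially immediate: if $\phi$ satisfies \eqref{NewClassesInd}, then for any fixed $\beta\in\N_0^d$ the function $\partial^\beta\phi$ has derivatives $\partial^\alpha(\partial^\beta\phi)=\partial^{\alpha+\beta}\phi$, and one bounds $h^{|\alpha+\beta|^\s}M_{|\alpha+\beta|}^{\t,\s}$ in terms of $h'^{|\alpha|^\s}M_{|\alpha|}^{\t,\s}$ using $\widetilde{(M.2)'}$ iteratively $|\beta|$ times together with the elementary inequality $(|\alpha|+|\beta|)^\s\le 2^{\s-1}(|\alpha|^\s+|\beta|^\s)$; the extra factor $C^{|\beta|^\s}M_{|\beta|}^{\t,\s}$ is absorbed into the new constant $C_K$. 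For \emph{iv)}, translation invariance is trivial since $\partial^\alpha(\phi(\cdot-x_0))=(\partial^\alpha\phi)(\cdot-x_0)$, while for a dilation $\phi(\lambda\cdot)$ the chain rule yields $\partial^\alpha(\phi(\lambda\cdot))=\lambda^{|\alpha|}(\partial^\alpha\phi)(\lambda\cdot)$, and the factor $\lambda^{|\alpha|}=(\lambda^{|\alpha|^{1-\s}})^{|\alpha|^\s}$ is again absorbed into $h$ after noting $\lambda^{|\alpha|}\le\max(1,\lambda)^{|\alpha|^\s}$ when $\s\ge 1$.

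For \emph{i)}, closure under pointwise multiplication, I would apply the Leibniz rule $\partial^\alpha(\phi\psi)=\suml_{\beta\le\alpha}\binom{\alpha}{\beta}\partial^\beta\phi\,\partial^{\alpha-\beta}\psi$, insert the bounds for $\phi$ and $\psi$, and the crucial step is to control the product $M_{|\beta|}^{\t,\s}M_{|\alpha|-|\beta|}^{\t,\s}$ by $M_{|\alpha|}^{\t',\s}$ for a possibly enlarged index $\t'=2^{\s-1}\t$, which is exactly the content of the sharp estimate $\widetilde{(M.2)}$ read in the reverse direction. One must also sum the binomial coefficients, using $\suml_{\beta\le\alpha}\binom{\alpha}{\beta}=2^{|\alpha|}\le 2^{|\alpha|^\s}$ for $\s\ge 1$, and collect the factors $h^{|\beta|^\s+(|\alpha|-|\beta|)^\s}\le h^{|\alpha|^\s}$ (assuming $h\le 1$, or otherwise bounding the exponent). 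The main obstacle is precisely this loss of the parameter $\t$ under multiplication: the product lands in $\E_{2^{\s-1}\t,\s}$ rather than $\E_{\t,\s}$ at the level of raw estimates, so I expect the statement to be understood with the standing convention that the classes are taken in the union/intersection sense over $\t$, as in \eqref{KlasePreseci}, or else one must verify that the enlargement of $\t$ can be reabsorbed after re-optimizing the constant $h$. Resolving this index bookkeeping cleanly is the delicate point and should be stated explicitly.

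For \emph{iii)}, closure under superposition with an entire function $F$, I would use the Faà di Bruno formula to express $\partial^\alpha\big(F(f)\big)$ as a sum over partitions, involving products of derivatives $F^{(k)}(f(x))$ and blocks of derivatives of $f$. Since $F$ is entire, $|F^{(k)}(f(x))|\le k!\,R^{-k}\max_{|z|\le\rho}|F(z)|$ on $K$ by Cauchy's estimates, where $\rho$ bounds $|f|$ on $K$; the factorial growth of $F^{(k)}$ is mild compared to the superfactorial growth of $M_p^{\t,\s}$ and is harmless. Inserting the extended Gevrey bounds for the blocks of $f$ and applying $\widetilde{(M.2)}$ repeatedly to combine the sequence factors across all blocks of a partition of $\alpha$ into the single factor $M_{|\alpha|}^{\t',\s}$ (again with a controlled enlargement of $\t$), then summing over the polynomially-many partitions, yields the desired bound \eqref{NewClassesInd} for $F(f)$. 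Here too the recurrent difficulty is controlling how the parameter $\t$ and the constant $h$ degrade through the combinatorial sum, and the same convention on the classes that resolves \emph{i)} will resolve \emph{iii)}; I would reference the analogous computation in \cite{PTT-01} to keep the partition-counting routine rather than exhaustive.
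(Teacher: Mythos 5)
Your overall architecture (Leibniz rule for \emph{i)}, iterated $\widetilde{(M.2)'}$ for \emph{ii)}, Fa\`a di Bruno with Cauchy estimates for \emph{iii)}, direct rescaling for \emph{iv)}) is the same as in the proofs the paper points to in \cite{PTT-01, PTT-031}, and your treatment of \emph{ii)} and \emph{iv)} is correct. However, for parts \emph{i)} and \emph{iii)} you have the key sequence inequality backwards, and this leads you to prove a weaker statement than the proposition asserts. You claim that $M^{\t,\s}_{|\beta|}M^{\t,\s}_{|\alpha|-|\beta|}$ can only be dominated by $M^{\t',\s}_{|\alpha|}$ with the enlarged parameter $\t'=2^{\s-1}\t$, ``by $\widetilde{(M.2)}$ read in the reverse direction,'' and you then propose to reinterpret the proposition through the union/intersection \eqref{KlasePreseci}. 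This is not what is needed: $\widetilde{(M.2)}$ bounds $M^{\t,\s}_{p+q}$ \emph{from above} by a product with enlarged parameter (a splitting estimate, useful for derivatives), whereas the Leibniz and Fa\`a di Bruno arguments require the opposite inequality
\[
M^{\t,\s}_{p}\,M^{\t,\s}_{q}\leq M^{\t,\s}_{p+q},\qquad p,q\in\N_0,
\]
which holds with the \emph{same} $\t$ and constant $1$: it is the standard consequence of log-convexity $(M.1)$ together with $M^{\t,\s}_0=1$, or simply
\[
p^{\t p^{\s}}q^{\t q^{\s}}\leq (p+q)^{\t(p^{\s}+q^{\s})}\leq (p+q)^{\t(p+q)^{\s}}.
\]
With this inequality the Leibniz sum and the partition products in Fa\`a di Bruno close within $\E_{\t,\s}(\mathbb{R}^d)$ for the given fixed $\t$; no enlargement $\t\to 2^{\s-1}\t$ occurs, your hedge about reinterpreting the statement is unnecessary, and the proposition holds exactly as stated. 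This is also consistent with the paper's remark that $(M.1)$ and $\widetilde{(M.2)'}$ --- not $\widetilde{(M.2)}$ --- are the essential conditions.

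Two smaller slips in the same parts. First, the bound $h^{|\beta|^{\s}+(|\alpha|-|\beta|)^{\s}}\leq h^{|\alpha|^{\s}}$ requires $h\geq 1$, not $h\leq 1$ as you wrote; this is harmless in the Roumieu case (enlarge $h$), while in the Beurling case one should use $a^{\s}+b^{\s}\geq (a+b)^{\s}/2^{\s-1}$ and run the estimate starting from the parameter $h^{2^{\s-1}}$ (suitably shrunk to absorb $2^{|\alpha|}$). Second, the number of terms in the Fa\`a di Bruno expansion is not ``polynomially-many'' (it has Bell-number growth), but this is still harmless: all such combinatorial factors, together with the $k!$ from the Cauchy estimates, are of size $e^{O(|\alpha|\log|\alpha|)}$, which is bounded by $C_{\eta}\,\eta^{|\alpha|^{\s}}$ for every $\eta>1$ since $\s>1$, and hence can be absorbed in both the Roumieu and the Beurling quantifier structure.
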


We note that conditions $(M.1)$ and $\widetilde{(M.2)'}$ play an essential role in the proof of Proposition \ref{OsobineKlasa},  which can be found in e.g. \cite{PTT-01, PTT-03, PTT-031}.

More details on extended Gevrey regularity with references to applications can be found in \cite{PTT-01, PTT-03, PTT-031, TT0, TT1}.



\section{Main results} \label{sec3}

In \cite{Hernandez} Dziuba\'nski and Hern\'andez constructed a band-limited orthonormal wavelet $\psi \in C^\infty ({\mathbb R})$ which belongs to the intersection of all Gevrey classes in the frequency domain, i.e., $\dss \widehat{\psi}\in\bigcap_{t>1} {\mathcal G}_t ({\mathbb R}) $. This is done through the refinement of
related constructions from \cite{HW, Mandel, H}.
By the Paley--Wiener theorem for Gevrey classes (cf. \cite[Theorem 1.6.1]{Rodino})
it follows that $\psi $ has a sub-exponential decay in the space (configuration) domain, i.e.
$$
| \psi (x) | \leq C e^{-|x|^{1/t}}, \qquad x \in \mathbb{R}, \qquad t>1,
$$
for some $C>0$, which is close to the critical decay \eqref{Eq:exp-decay}.

It turned out that the construction from \cite{Hernandez} can be further refined by using ultradifferentiable functions given by sequences $M_p$, $p\in \N_0$, which satisfy $(M.1)$, $(M.2)'$ and $(M.3)'$, see Subsection \ref{sequences}. Namely, Moritoh and Tomoeda considered  sequences of the form
$$\dss L^{n,\s}_p=p!(\log_n p)^{\s}\prod_{k=1}^{n-1}\log_k p,\quad p\in \N_0,$$ where $n\in\N_0$ and $\s>1$ ($\log_1 p=\log p$), \cite{Moritoh}. It is easy to see that for every $\s,t>1$ and $n\in \N_0$ there exists $C>0$ such that
$$
L^{n,\s} _p \leq p!^t \frac{(\log_2 p)^{n+\s}}{p!^{t-1} }\leq C p!^t,\quad p\in \N_0.
$$
Therefore classes generated by $L^{n,\s} _p$ are contained in the intersection of all Gevrey classes. The decay of wavelets $ \psi $ from \cite{Moritoh} is given by
$$
| \psi (x) | \leq  C e^{- \frac{|x|}{ l_{n,\sigma} (|x|)}}, \qquad |x| \;\; \text{large enough},  \; \sigma>1, \; n \in \N,
$$
and $ l_{n,\sigma} (\cdot) =(\log_n (\cdot))^\sigma \prod_{k=1}^{n-1}\log_k  (\cdot) $.

\par

Next we extend the classes of orthonormal wavelets from \cite{Hernandez} and \cite{Moritoh},
and construct a band-limited orthonormal wavelet $\psi\in C^\infty ({\mathbb R})$  with decay given in terms of the Lambert $W$ function.

We first construct a cutoff function  $\phi \in \mathcal{D}_{ \sigma} ({\mathbb R})$ as follows.

\begin{lema} \label{NasaCutoff}
Let $\s>1$. For every $a>0$ there exists
$\phi_a \in \mathcal{D}_{ \sigma}({\mathbb R})$ $=\cap_{\t>0}{\D}_{\t,\s}({\mathbb R}) $ such that $\phi_a  \geq 0$, $\supp\phi_a \subset [-a,a]$, and $\int_{\mathbb R } \phi_a (x) \, dx = 1$.
\end{lema}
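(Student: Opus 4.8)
The plan is to construct $\phi_a$ explicitly by the standard mollifier recipe adapted to the extended Gevrey scale, following the same strategy that produced Lemma \ref{teoremaKompaktannosac}, and then to rescale to the prescribed support. First I would recall from Lemma \ref{teoremaKompaktannosac} that for each fixed $\t>0$ there exists a compactly supported $\psi_\t \in \E_{\t,\s}(\mathbb{R})$ with $0\leq \psi_\t\leq 1$ and $\int \psi_\t\,dx=1$. The subtlety is that Lemma \ref{NasaCutoff} asks for a single function lying in the intersection $\mathcal{D}_\s(\mathbb{R})=\cap_{\t>0}\D_{\t,\s}(\mathbb{R})$, which is a strictly stronger requirement. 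The construction behind Lemma \ref{teoremaKompaktannosac} builds the bump as an infinite convolution
\[
\Phi=\ast_{j=1}^{\infty}\, \frac{1}{2h_j}\,\chi_{[-h_j,h_j]},
\]
where $\{h_j\}$ is a suitably chosen summable sequence of half-widths. The growth rate of the sequence $\{h_j\}$ controls the Denjoy--Carleman-type estimate on $\partial^\alpha \Phi$, and the flexibility in choosing $\{h_j\}$ is exactly what lets one land in $\E_{\t,\s}$.

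The key technical point is that the sequence $M_p^{\t,\s}=p^{\t p^\s}$ satisfies the non-quasianalyticity condition $(M.3)'$ for every $\t>0$ (since $\s>1$), by the explicit estimate \eqref{NizOcena}. This means the infinite convolution converges for an appropriate choice of $\{h_j\}$ tied to the ratios $M_{p-1}^{\t,\s}/M_p^{\t,\s}$, and the resulting $\Phi$ is smooth, nonnegative, of mass one, and compactly supported with derivative bounds of the form $|\partial^\alpha\Phi(x)|\leq C\,h^{|\alpha|^\s}M_{|\alpha|}^{\t,\s}$. To reach the intersection over all $\t>0$, I would exploit that \eqref{NizOcena} provides a bound whose decay improves as $\t$ grows, so a single diagonal choice of $\{h_j\}$ decaying fast enough makes the estimate valid simultaneously for every $\t>0$; concretely, one chooses $h_j$ comparable to $M_{j-1}^{\t_j,\s}/M_j^{\t_j,\s}$ along a sequence $\t_j\to\infty$, which yields membership in $\D_{\t,\s}$ for each fixed $\t$ once $j$ is large. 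This gives $\Phi\in\mathcal{D}_\s(\mathbb{R})$.

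Finally I would handle the prescribed support and normalization. Writing $L$ for the length of $\supp\Phi$, set $\phi_a(x)=c\,\Phi\bigl(\tfrac{L}{2a}\,x\bigr)$, so that $\supp\phi_a\subset[-a,a]$; by Proposition \ref{OsobineKlasa} \emph{iv)} the class $\E_{\t,\s}$ is invariant under dilations, hence the rescaled function remains in $\mathcal{D}_\s(\mathbb{R})$. Choosing the constant $c>0$ so that $\int_{\mathbb{R}}\phi_a\,dx=1$ and noting nonnegativity is preserved completes the construction.

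The main obstacle I anticipate is verifying that the diagonal choice of half-widths genuinely places $\Phi$ in \emph{every} $\D_{\t,\s}$ at once rather than in a single one; this requires a careful bookkeeping of the convolution derivative estimate, using $(M.1)$ (logarithmic convexity) to pass from ratio bounds to bounds on $M_{|\alpha|}^{\t,\s}$ itself, and using $\widetilde{(M.2)'}$ to absorb the factors $h^{|\alpha|^\s}$ when comparing across different values of $\t$. Everything else—dilation invariance and normalization—is routine given Proposition \ref{OsobineKlasa}.
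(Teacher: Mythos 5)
Your overall strategy --- an infinite convolution of rescaled bumps whose half-widths are chosen ``diagonally'' across the parameter $\tau$, then dilation and normalization --- is exactly the paper's strategy, but your diagonal points in the wrong direction, and this is fatal. You take $h_j \asymp M_{j-1}^{\tau_j,\sigma}/M_j^{\tau_j,\sigma}$ with $\tau_j \to \infty$ (``decaying fast enough''). Note that the ratio $M_{j-1}^{\tau,\sigma}/M_j^{\tau,\sigma}=\exp\bigl(-\tau\,(j^{\sigma}\ln j-(j-1)^{\sigma}\ln(j-1))\bigr)$ is \emph{decreasing} in $\tau$, so your choice makes the bumps narrower, i.e.\ the function rougher. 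The derivative bound coming out of the convolution is $\|\Phi^{(n)}\|_{\infty}\lesssim \prod_{j\in J}h_j^{-1}$ over a set $J$ of $n$ differentiated factors, and membership in $\D_{\tau,\sigma}$ needs this to be $O\bigl(h^{n^{\sigma}}n^{\tau n^{\sigma}}\bigr)$; smaller $\tau$ is the \emph{stronger} requirement, so the intersection over all $\tau>0$ forces the widths to be as large as summability permits. With your choice, since each $h_j^{-1}\ge 1$ and $h_j^{-1}\ge M_j^{T,\sigma}/M_{j-1}^{T,\sigma}$ whenever $\tau_j\ge T$, the best possible placement $J=\{1,\dots,n\}$ (the widest factors) still gives, for every $T>0$ and $j_0=j_0(T)$,
\[
\prod_{j=1}^{n}h_j^{-1}\;\ge\;\prod_{j=j_0}^{n}\frac{M_j^{T,\sigma}}{M_{j-1}^{T,\sigma}}\;=\;\frac{M_n^{T,\sigma}}{M_{j_0-1}^{T,\sigma}},
\]
and taking $T=\tau+1$ this eventually exceeds $C\,h^{n^{\sigma}}n^{\tau n^{\sigma}}$ for every fixed $C,h,\tau$. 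So your construction is not shown to lie in \emph{any} $\D_{\tau,\sigma}$, let alone in $\cap_{\tau>0}\D_{\tau,\sigma}$. You conflated two roles of \eqref{NizOcena}: faster decay of the widths (large $\tau$) helps only the summability $\sum h_j<\infty$ (compact support); regularity pulls the opposite way.

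The paper's proof runs the diagonal the other way: the $\tau$-parameter \emph{decreases}, $\tau_m=1/m\to 0$, with widths $a_p=(2(p+1))^{-\frac{1}{m}p^{\sigma-1}}$ on the block $N_m\le p<N_{m+1}$, and the thresholds $N_m$ are chosen so that each block's tail sums to less than $2^{-m}$; this reconciles $\sum a_p\le 1$ (support in $[-1,1]$) with widths that decay as slowly as the scale allows. A second point your sketch misses, needed even with the correct diagonal: for a given $\tau$ one must place the $n$ derivatives not on the first $n$ factors (those are the narrowest) but on the factors $f_{a_{N_m+1}},\dots,f_{a_{N_m+n}}$ of the $m$-th block onward, where $1/m<\tau$, the earlier factors contributing only $\|f_{a_p}\|_{1}=1$. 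This is why the paper obtains $|\phi_p^{(n)}|\le C^{n^{\sigma}}n^{\tau n^{\sigma}}$ only for $p\ge p_0(n,\tau)$ and must finish with a Cauchy-sequence/limit argument: the estimate for fixed $\tau$ is genuinely asymptotic in the number of convolution factors.
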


\begin{proof}
Since $ \mathcal{D}_{ \sigma}({\mathbb R})$ is closed under dilations and multiplications by a constant, it is enough to show  the result for $a=1$.

The essential part of the construction is to carefully choose a sequence $a_p $,  $p \in \mathbb N _0$, which will give rise to decay properties specified by the definition of extended Gevrey classes.

Consider first the (double indexed) sequence
$$
a(m,p) = \frac{1}{(2 (p + 1))^{\frac{1}{m}p^{\sigma - 1}}} , \qquad m,p\in\mathbb N _0,
$$
for a given $\sigma >1$.
Since
$$
\sum_{p=1}^{\infty} \frac{1}{(2 (p + 1))^{\frac{1}{m}p^{\sigma - 1}}}< \infty
$$
for any  $m\in\mathbb N _0$, it follows that
there exists a sequence $N_m \in\mathbb N _0$ such that
$$
\sum_{n=N_m}^{\infty} \frac{1}{(2 (p + 1))^{\frac{1}{m}p^{\sigma - 1}}} < \frac{1}{2^m}.
$$
Thus the sequence  $a_p $, $p \in \mathbb N _0$, given by
$$
a_p := \frac{1}{(2 (p + 1))^{\frac{1}{m}p^{\sigma - 1}}}, \qquad N_m  \leq p < N_{m+1},
$$
satisfies
$$
\sum_{n=N_1}^{\infty} a_p \leq 1.
$$

The next step in the construction is to observe the sequence of functions
$$
\phi_{p} := f_{a_{N_1}}* f_{a_{N_1 + 1}}*\dots * f_{a_{p}},
$$
where $f\in C^{\infty} (\mathbb R )$ is a non-negative and even function such that
$\supp f \in [-1,1]$, $\int_{-1}^1 f(x) \, dx = 1$,
and $f_{a} (x) = \frac{1}{a} f\left(\frac{x}{a}\right)$. From $\int_{\mathbb R} f_a (x) \, dx = 1$ (and $\int |(f*g)(x)| \, dx = \int |f (x)| \, dx \int |g (x)| \, dx$) it follows that
$$\supp\phi_{p} \subset [-1,1] \quad \text{ and } \quad
\int_{-1}^1 \phi_{p}(x) \, dx = 1, \qquad
p\in\mathbb N _0.
$$

Let us show that for every given $\tau > 0$ there exists $C_{\tau} > 0$ such that
$$ |\phi_p ^{(n)} (x)| \leq C_{\tau} ^{n^{\sigma}} n^{\tau n^{\sigma}}, \qquad x\in\mathbb R,$$
for every $n\in\mathbb N _0$ and every $p\geq p_0 (n, \tau)$.
Note that
$$ \phi_{p} ^{(n)} = f_{a_{N_1}}*\dots * f_{a_{N_m}}* f'_{a_{N_m + 1}}*\dots * f '_{a_{N_m + n}}*
f_{a_{N_m + n + 1}}*\dots * f_{a_{p}},$$
and
$$ \| f' _{a_p} \|_1 = \frac{1}{a_p}
\int_{\mathbb{R}} \frac{1}{a_p} \left| f' \left( \frac{x}{a_p} \right) \right | \, dx
\leq \frac{c}{a_p} \leq c \, (2 (p + 1))^{\frac{1}{m}p^{\sigma - 1}}$$
when $p\geq N_m$.

Let $n\in\mathbb N _0$ be given, and consider the sequence $\tau_m = \frac{1}{m}$, $m\in\mathbb N _0$. Then for any given $\tau > 0$ we can choose $m\in\mathbb N _0$ so that $\tau_m < \tau$, and $p\in\mathbb N _0$ so that $N_m + n < p$.

By using the fact that ${\widetilde{(M.2)'}}$ implies
$$
M^{\frac{1}{m}, \s} _{p+q} \leq \tilde{C}^{p^\s} M^{\frac{1}{m}, \s} _{p}
$$
for some $\tilde{C}= \tilde{C}(q)>0$, we obtain

\begin{align*}
|\phi_p ^{(n)} (x)| &\leq c^n \, 2^{\frac{1}{m} \sum_{k=1}^n (N_m+k)^{\sigma - 1}} \prod_{k=1}^n (N_m + k + 1)^{\frac{1}{m} (N_m+k)^{\sigma - 1}}\\
&\leq c^n 2^{\frac{1}{m} n(N_m + n)^{\sigma - 1}} (N_m + n + 1)^{\frac{1}{m} (N_m + n)^{\sigma - 1} n}\\
&\leq c^n 2^{\frac{1}{m} (N_m + n)^{\sigma }} (N_m + n + 1)^{\frac{1}{m} (N_m + n + 1)^{\sigma }}\\
&{\leq} c^n 2^{(N_m + n)^{\sigma}} \tilde{C} ^{n^{\sigma}} \, n^{\frac{1}{m} n^{\sigma}}
\leq C^{n^{\sigma}} n^{\tau n^{\sigma}},
\end{align*}
where  $C$ depends on $\t$.

The sequence $ \{ \phi_p ^{(n)} \; | \; p = N_1, N_2, \dots \} $ is  a Cauchy sequence for every  $ n = 0,1,2,\dots$. Thus it converges to a function $ \phi $ which satisfies
$$
|\phi ^{(n)} (x)| \leq C^{n^{\sigma}} n^{\tau n^{\sigma}},
$$
for every $\t > 0$. Therefore $\phi \in \mathcal{D}_{ \sigma}({\mathbb R})$, and by the construction $\phi  \geq 0$, $\supp\phi \subset [-1,1]$ and $\int_{\mathbb R } \phi (x) \, dx = 1$, which completes the proof.
\end{proof}

\begin{rem} \label{rem-sharp}
We note that the construction in Lemma \ref{NasaCutoff} is sharp in the sense that $\phi$ does not belong to any Gevrey class, i.e. $\phi \not\in \bigcup_{t>1}  {\mathcal G}_t ({\mathbb R})$. In fact, the regularity of $\phi $ is related to the minimal size of its support. We refer to the proof of  \cite[Lemma 1.3.6.]{H} for details.
\end{rem}

Next we modify the construction of ONW from  \cite{Hernandez} to obtain ONW in the framework of the extended Gevrey regularity. In the proof of Lemma \ref{Prop:F-of-psi} we use the properties of extended Gevrey classes given in  Proposition \ref{OsobineKlasa}.

\begin{lema}
\label{Prop:F-of-psi}
Let $\s>1$, and let $ \mathcal{D}_{\s} ({\mathbb R})$ be given by \eqref{KlasePreseci}. Then
there exists an orthonormal wavelet $ \psi \in C^\infty  ({\mathbb R})$
such that $\widehat{\psi}  \in \mathcal{D}_{\s} ({\mathbb R})$.
\end{lema}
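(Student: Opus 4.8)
The plan is to construct $\widehat\psi$ explicitly following the Meyer/Lemari\'e--Mallat paradigm, adapting the Dziuba\'nski--Hern\'andez scheme from \cite{Hernandez} so that smoothness lands in the extended Gevrey class $\mathcal D_\sigma(\mathbb R)$ rather than an intersection of Gevrey classes. Recall that a function $\psi$ is an orthonormal wavelet whenever its Fourier transform has the form $\widehat\psi(\xi)=e^{i\xi/2}b(\xi)$ for a suitable real, even-symmetric ``bell'' profile $b$ supported in an annular set $\tfrac{2\pi}{3}\le|\xi|\le\tfrac{8\pi}{3}$ that satisfies the two Meyer conditions
\begin{equation}
\sum_{m\in\mathbb Z} b(2^m\xi)^2 = 1 \quad\text{a.e.}, \qquad
b(\xi)^2 + b(2\pi-\xi)^2 = 1 \text{ on } [0,2\pi].
\end{equation}
First I would take the standard transition profile and replace the usual $C^\infty$ partition-of-unity bump by the cutoff function $\phi=\phi_a\in\mathcal D_\sigma(\mathbb R)$ produced in Lemma~\ref{NasaCutoff}. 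Concretely, setting $\Theta(\xi)=\int_{-\infty}^\xi \phi_a(t)\,dt$ gives a monotone $\mathcal E_\sigma$ transition from $0$ to $1$ across an interval of length $2a$, with $\Theta(\xi)+\Theta(-\xi)\equiv 1$ by the evenness and unit mass of $\phi_a$. One then builds $b$ from $\Theta$ (e.g. $b(\xi)=\sin\bigl(\tfrac{\pi}{2}\Theta(\cdot)\bigr)$ on the rising edge and the reflected analogue on the falling edge), choosing $a$ small enough that the supports are positioned inside the Meyer annulus.

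The verification splits into two independent checks. The orthonormality/basis property reduces, by the cited Meyer--Lemari\'e characterization, entirely to the two algebraic identities above; these hold for formal reasons from $\Theta(\xi)+\Theta(-\xi)=1$ together with the trigonometric identity $\sin^2+\cos^2=1$, exactly as in the classical construction, and are unaffected by which smoothness class $\phi_a$ inhabits. The regularity claim $\widehat\psi\in\mathcal D_\sigma(\mathbb R)$ is where the extended Gevrey machinery enters. Here I would invoke Proposition~\ref{OsobineKlasa}: the antiderivative $\Theta$ lies in $\mathcal E_\sigma$ because $\mathcal D_\sigma\subset\mathcal E_\sigma$ and the class is stable under derivation (so membership is controlled by controlling $\Theta'=\phi_a$); the composition $\sin\bigl(\tfrac\pi2\Theta\bigr)$ stays in $\mathcal E_\sigma$ by closure under superposition with entire functions (Proposition~\ref{OsobineKlasa} $iii)$); and the final modulation by $e^{i\xi/2}$ is again a superposition with an entire function, while products of the edge pieces are handled by closure under pointwise multiplication (Proposition~\ref{OsobineKlasa} $i)$). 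Since $\widehat\psi$ is compactly supported by construction, it then belongs to $\mathcal D_\sigma(\mathbb R)=\cap_{\tau>0}\mathcal D_{\tau,\sigma}(\mathbb R)$, and $\psi\in C^\infty(\mathbb R)$ follows because $\widehat\psi\in L^1$ has all moments integrable.

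The main obstacle is organizational rather than conceptual: one must confirm that the closure properties in Proposition~\ref{OsobineKlasa}, which are stated for the intersection class $\mathcal E_{\tau,\sigma}$ and a fixed $\tau$, propagate to the projective intersection $\mathcal E_\sigma=\cap_{\tau>0}\mathcal E_{\tau,\sigma}$. This is the delicate point because the extended Gevrey estimates lose a factor (the parameter doubles to $2^{\sigma-1}\tau$ in $\widetilde{(M.2)}$) under multiplication and composition, so one cannot simply fix $\tau$ once and for all; instead, given a target $\tau>0$ one chooses the starting membership at a smaller parameter $\tau'=2^{-(\sigma-1)}\tau$ (or smaller, iterating for finitely many composition/product operations) and absorbs the constants. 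Since $\phi_a\in\mathcal D_\sigma$ already provides membership for \emph{every} $\tau'>0$, this rescaling is always available, and the finitely many algebraic operations used to assemble $b$ therefore keep $\widehat\psi$ in $\mathcal E_\sigma$. Once this bookkeeping is settled, the two Meyer identities close the argument.
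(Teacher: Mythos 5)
Your scaffolding matches the paper's own proof: take the cutoff $\phi_a$ of Lemma \ref{NasaCutoff}, form its antiderivative, build the bell function by applying $\sin$ and $\cos$ (Proposition \ref{OsobineKlasa} \emph{iii)}, then \emph{i)}, \emph{iv)} for products and translations/dilations), set $\widehat{\psi}(\xi)=e^{i\xi/2}b(\xi)$, and defer the orthonormality to the classical Lemari\'e--Meyer/AWW theory (the paper cites \cite{AWW} and \cite[Corollary 4.7, Chapter 1]{HW}). Your point about parameter bookkeeping is also sound and consistent with the paper: since $\phi_a\in\mathcal{D}_{\sigma}(\mathbb{R})$ gives membership in $\mathcal{D}_{\tau',\sigma}$ for \emph{every} $\tau'>0$, any loss of the form $\tau\mapsto 2^{\sigma-1}\tau$ in finitely many products and compositions is harmless in the intersection class; the paper handles the modulation $e^{i\xi/2}$ by an explicit Leibniz estimate to the same effect.

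There is, however, a genuine error in the one place where the construction actually has content: the shape of the bell. You propose $\sin\bigl(\tfrac{\pi}{2}\Theta\bigr)$ on the rising edge near $\pi$ and ``the reflected analogue on the falling edge'' near $2\pi$. A falling edge that is merely the mirror image of the rising edge (same transition width) does \emph{not} satisfy the dilation identity $\sum_{m}b(2^m\xi)^2=1$ which you yourself list as a requirement: compatibility of consecutive dyadic scales forces the falling edge at $2\pi$ to be the rising edge profile \emph{stretched by a factor of} $2$, i.e.\ with transition width $4a$ rather than $2a$. This is precisely why the paper takes $b_a(x)=S_a(x-\pi)\,C_{2a}(x-2\pi)$, where the cosine factor is built from the dilated cutoff $\phi_{2a}$; then $\theta_{2a}(2s)=\theta_a(s)$, so for $\xi$ near $\pi$ one gets $b_a(2\xi)^2+b_a(\xi)^2=\cos^2(\theta_a(\xi-\pi))+\sin^2(\theta_a(\xi-\pi))=1$, which is exactly the scale-compatibility your symmetric bell would violate. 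Relatedly, your second Meyer identity is misstated: $b(\xi)^2+b(2\pi-\xi)^2=1$ cannot hold on all of $[0,2\pi]$ (at $\xi=2\pi$ the left-hand side equals $b(2\pi)^2=\cos^2(\pi/4)=\tfrac12$ for any correct Meyer bell, since $b(0)=0$); it is the reflection identity \emph{near} $\pi$, coming from $\theta_a(s)+\theta_a(-s)=\tfrac{\pi}{2}$, together with $b\equiv 1$ on the plateau, that enters the characterization. As written, your $b$ would not yield an orthonormal wavelet; fixing it requires exactly the asymmetric two-width structure of the paper's \eqref{ba}.
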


\begin{proof}
Take arbitrary $\t>0$ and fix $\s>1$. Let $a \in (0, \pi/3)$ be fixed, and choose a function
$\phi_a \in \mathcal{D}_{\sigma} ({\mathbb R})$
as in Lemma \ref{NasaCutoff}, so that
$ \int_{\mathbb{R}} \phi_a (x) dx = \pi/2$.

Set
$$\theta_{a} (x) = \int_{-\infty}^x \phi_{a} (t) \, dt,\quad  x \in {\mathbb R},$$
so that $\theta_{a}$ is an increasing function, $\theta_a (x) = 0$ for $x<-a$, $\theta_a (x) = \frac{\pi}{2}$ for $x>a$, and $\theta_a \in \mathcal{E}_{ \sigma} ({\mathbb R})$, because $\theta_a ^{(n)} (x) = \phi ^{(n-1)} (x)$ for $n>1$, and
$$|\theta_a (x)| \leq (x+a) \sup \phi_a (x) \leq 2a \sup \phi_a (x),$$
and $\phi_a \in \mathcal{D}_{ \sigma} ({\mathbb R})$.

Then, by Proposition \ref{OsobineKlasa} \emph{iii)} the functions
$$
S_a (x) : = \sin (\theta_a (x)) \qquad \text{ and } \qquad C_a (x) : = \cos (\theta_a (x)), \quad  x \in {\mathbb R},
$$
belong to $\mathcal{E}_{ \sigma} ({\mathbb R})$.
$S_a (x)$ is an increasing function, and
$S_a (x-\pi) = 0$ for $x\leq \pi - a$, $S_a (x-\pi) = 1$ for $x > \pi + a$.
Similarly, $C_{2a} (x)$ is a decreasing function, and $C_{2a} (x-2\pi) = 1$ for $x\leq 2(\pi - a)$, $C_{2a} (x-2\pi) = 0$ for $x>2(\pi + a)$.

Next we define
\begin{equation}
\label{ba}
b_a (x) = S_a (x-\pi) C_{2a} (x-2\pi),  \quad  x \in {\mathbb R}.
\end{equation}
The function $b_a (x)$ is a bell shaped function, and $b_a (x) = 0$ when $x\leq \pi - a$ or $x\geq 2(\pi+a)$.
(In fact, the condition $0<a< \frac{\pi}{3}$ comes from $\pi + a < 2(\pi - a)$.)

\par

By Proposition \ref{OsobineKlasa} \emph{i)} and  \emph{iv)} it follows that
$b_a \in \mathcal{D}_{\sigma} ({\mathbb R})$
and, by a slight abuse of notation, we denote its even extension to $(-\infty , 0]$ by $b_a$  as well. Then it is proved in \cite{AWW} (see also \cite[Corollary 4.7, Chapter 1]{HW})
that the function $\psi$ defined by
\begin{equation}
\label{psi}
\widehat{\psi} (\xi) = e^{i \frac{\xi}{2}} b_a (\xi), \qquad \xi \in {\mathbb R},
\end{equation}
is an orthonormal wavelet in $L^2 (\mathbb{R})$, and $\psi \in C^\infty  ({\mathbb R})$.

Finally, that $\widehat{\psi}  \in \mathcal{D}_{\s} ({\mathbb R})$ follows from
\begin{align*}
|(e^{i\frac{\xi}{2}} b_a (\xi))^{(p)}|
&= \left| \sum_{k=0}^p \binom{p}{k} \frac{i^k}{2^k} e^{i \frac{\xi}{2}} b_a  ^{(p-k)} (\xi) \right|
\leq \sum_{k=0}^p \binom{p}{k} \frac{1}{2^k} |b_a ^{(p-k)} (\xi)|\\
&\leq \sum_{k=0}^p \binom{p}{k} \frac{1}{2^k} C^{(p-k)^{\sigma}} (p-k)^{\tau (p-k)^{\sigma}}\\
&\leq C^{p^{\sigma}} p^{\tau p^{\sigma}} \sum_{k=0}^p \binom{p}{k} \frac{1}{2^k} \leq C^{p^{\sigma}} p^{\tau p^{\sigma}} \sum_{k=0}^p \binom{p}{k} \\
&\leq \tilde{C} ^{p^{\sigma}}  p^{\tau p^{\sigma}},
\end{align*}
for some $C<0$ and $\tilde{C}  = 2C$.
\end{proof}

To prove Theorem \ref{Th:first-estimate} we  use the  Paley-Wiener theorem for extended Gevrey regularity.
A more general statement than Proposition \ref{NASPW} is given in \cite[Theorem 3.1]{PTT-03}.

\begin{prop}
\label{NASPW}
Let $\sigma > 1$. The Fourier transform of a function  $\dss f\in \, \mathcal{D_{ \sigma}}({\mathbb R})$ is an analytic function, and for every $h>0$ there exists a constant $C_{h }>0$ such that
\begin{align}
\label{ocenaTeorema1}
|\widehat{f}(\eta)| \leq C_{h } \, \displaystyle e^{ - h \left (
 \ln^{\frac{\s}{\s-1}}(|\eta|) / W^{\frac{1}{\s-1}}  (\ln(|\eta|))
\right ) }, \qquad |\eta| \quad \text{large enough},
\end{align}
where $W $ denotes the Lambert function.
\end{prop}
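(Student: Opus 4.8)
The plan is to run the classical Paley--Wiener argument adapted to the weight sequence $M^{\t,\s}_p=p^{\t p^\s}$, and then convert the resulting decay—which is naturally governed by the associated function $T_{\t,\s}$ of \eqref{asocirana}—into the Lambert form by invoking Proposition~\ref{TeoremaAsocirana}. Analyticity is immediate and in fact yields more: since $f\in\mathcal{D}_\sigma({\mathbb R})$ has compact support, say $\supp f\subset[-A,A]$, the integral $\widehat f(\eta)=\int_{-A}^{A}f(x)e^{ix\eta}\,dx$ is well defined for complex $\eta$ and, by differentiation under the integral sign, entire in $\eta$. It therefore remains to prove the stated decay for real $\eta$ with $|\eta|$ large.

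First I would normalize the regularity of $f$. Because $\mathcal{D}_\sigma=\cap_{\t>0}\mathcal{D}_{\t,\s}$, for each $\t>0$ there are constants $C,h>0$ with $\|f^{(n)}\|_\infty\le C\,h^{n^\s}M^{\t,\s}_n$. The nuisance factor $h^{n^\s}=e^{n^\s\ln h}$ is harmless: for any $\t'>\t$ one has $M^{\t',\s}_n/M^{\t,\s}_n=n^{(\t'-\t)n^\s}=e^{(\t'-\t)n^\s\ln n}$, whose exponent dominates $n^\s\ln h$, so $h^{n^\s}M^{\t,\s}_n\le C'M^{\t',\s}_n$ for all $n$. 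Hence, after passing to a slightly larger value of $\t$ and adjusting the constant, I may assume the clean bound $\|f^{(n)}\|_\infty\le C_\t\,M^{\t,\s}_n$, valid for every $\t>0$ with some $C_\t>0$. Using $\widehat{f^{(n)}}(\eta)=(-i\eta)^n\widehat f(\eta)$ then gives
$$
|\eta|^n\,|\widehat f(\eta)|=|\widehat{f^{(n)}}(\eta)|\le\|f^{(n)}\|_1\le 2A\,\|f^{(n)}\|_\infty\le 2A\,C_\t\,M^{\t,\s}_n .
$$
Dividing by $|\eta|^n$ and taking the infimum over $n$ identifies the right-hand side with the associated function of \eqref{asocirana}: for $|\eta|$ large enough that the supremum defining $T_{\t,\s}$ is attained at a positive value,
$$
|\widehat f(\eta)|\le 2A\,C_\t\,\inf_{n\in{\mathbb N}_0}\frac{M^{\t,\s}_n}{|\eta|^n}=2A\,C_\t\,e^{-T_{\t,\s}(|\eta|)} .
$$

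The decisive step is to feed in the sharp asymptotics of $T_{\t,\s}$. By the lower estimate in \eqref{KonacnaocenaAsocirana} there are $B_\s,\widetilde B_{\t,\s}>0$ with $T_{\t,\s}(|\eta|)\ge B_\s\,\t^{-\frac{1}{\s-1}}T_\s(|\eta|)+\widetilde B_{\t,\s}$, whence
$$
|\widehat f(\eta)|\le 2A\,C_\t\,e^{-\widetilde B_{\t,\s}}\,e^{-B_\s\t^{-\frac{1}{\s-1}}T_\s(|\eta|)} .
$$
Given an arbitrary target $h>0$, I would choose $\t=(B_\s/h)^{\s-1}$, so that $B_\s\t^{-\frac{1}{\s-1}}=h$; recalling $T_\s(k)=\ln^{\frac{\s}{\s-1}}k/W^{\frac{1}{\s-1}}(\ln k)$ and absorbing the remaining constants into $C_h$ yields exactly \eqref{ocenaTeorema1}.

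The hard part is conceptual rather than computational: it is the matching of the two parameters. The ``for every $h>0$'' in the conclusion is generated by the ``for every $\t>0$'' built into the projective intersection defining $\mathcal{D}_\sigma$, and the precise exchange rate between them is dictated by the scaling $T_{\t,\s}\asymp\t^{-\frac{1}{\s-1}}T_\s$ of Proposition~\ref{TeoremaAsocirana}. Once this dictionary is in hand, the remaining ingredients—absorbing the factor $h^{n^\s}$, recognizing the infimum as $e^{-T_{\t,\s}}$, and the elementary Paley--Wiener inequality—are routine.
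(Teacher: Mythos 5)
Your proposal is correct and follows essentially the same route as the paper's own proof: absorb the $h^{n^{\s}}$ factor by passing to a larger $\t$ (the paper does this by starting from $f\in\mathcal{D}_{\t/2,\s}$), apply the elementary Paley--Wiener inequality to get $|\widehat f(\eta)|\leq C\,e^{-T_{\t,\s}(|\eta|)}$, invoke the lower bound in \eqref{KonacnaocenaAsocirana}, and fix the exchange rate $\t=(B_{\s}/h)^{\s-1}$. No gaps; even the parameter-matching step you highlight as the conceptual core is exactly the paper's final step.
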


Notice that for $\s = 2 $ \eqref{ocenaTeorema1} amounts to \eqref{Eq:Lambert-decay} (with $\psi =\widehat{f}$).

\begin{proof} The analyticity of $f$ follows from the classical Paley-Wiener theorem, cf. \cite{H}. It remains prove \eqref{ocenaTeorema1}.

Take arbitrary $h>0$ and let $\dss f\in \, \mathcal{D_{ \sigma}}({\mathbb R})$ for $\s>1$, i.e.
$\dss f\in \, \mathcal{D_{\t, \sigma}}({\mathbb R})$ for every $\t>0$.
Let $K$ denote the support of $f$. Since $f\in {\D_{\frac{\t}{2}, \sigma}}({\mathbb R})$, by
Definition \ref{def:extendedGervey} we get the following estimate:
$$
|\eta^n\widehat f(\eta)|=  |\widehat{f^{(n)}}(\eta)|\leq C   \sup_{x\in K}|{f^{(n)}}(x)|\leq C^{n^\s +1}_1 n^{\frac{\t}{2} n^\s}\leq C_2 n^{\t n^{\s}}, $$
for a suitable constant $C_2>0$. Now, the relation between the sequence $M_p^{\t,\s}$ and its associated function
$ T_{\t,\s}$ given by \eqref{asocirana} implies that
\begin{equation}
\label{PWsaT}
|\widehat f(\eta)|
\leq C_2 \inf_{n\in \N_0}\frac{n^{\t n^\s}}{|\eta|^n} = C_2 e^{- T_{\t,\s}(|\eta|)},\quad \quad |\eta| \quad \text{large enough}.
\end{equation}
Then, by the left-hand side of \eqref{KonacnaocenaAsocirana} we get
$$
|\widehat f(\eta)| \leq C_3 e^{- B_\s\,\t^{-\frac{1}{\s-1}}
\left ( \ln^{\frac{\s}{\s-1}}(|\eta|) / W^{\frac{1}{\s-1}}  (\ln(|\eta|)) \right ) }
\qquad |\eta| \quad \text{large enough},
$$
with $\displaystyle C_3 =  C_2 e^{-\widetilde{B}_{\t,\s}} $. By choosing $ \t=(B_{\s}/h)^{\s-1}$, we obtain \eqref{ocenaTeorema1}, which proves the claim.
\end{proof}

In the proof of Theorem \ref{Th:first-estimate} \emph{ii)} we will use the following Lemma,
see \cite[Lemma 1, p.198]{GelfandShilov}.

\begin{lema}
\label{LemaNejednakost}
Let $\varphi \in C^\infty ({\mathbb R})$ be such that
$$
\left|x^k \varphi^{(q)}(x)\right| \leqslant C A^k B^q m_{k ,q} ,\quad k,q \in \mathbb N _0,
$$
for some $A,B,C>0$, where the numbers $m_{k,q}$ are such that
\be
\label{UslovnaNiz}
k q \frac{m_{k-1, q-1}}{m_{k ,q}} \leqslant \gamma(k+q)^\theta, \qquad  \theta \leqslant 1,
\ee
for some $\gamma>0$. Then the function $\varphi $ also satisfies
$$
\left|\left[x^k \varphi(x)\right]^{(q)}\right| \leqslant C_1 A_1^k B_1^q m_{k, q},\quad k,q\in \mathbb N _0,
$$
for some other constants $A_1,B_1,C_1>0$.
\end{lema}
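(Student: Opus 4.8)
The plan is to expand the $q$-th derivative of $x^k\varphi(x)$ by the Leibniz rule and to estimate each resulting term by the given hypothesis, using the growth condition \eqref{UslovnaNiz} to compare $m_{k-j,q-j}$ with $m_{k,q}$. Explicitly, since $(x^k)^{(j)} = \frac{k!}{(k-j)!}x^{k-j}$ vanishes for $j>k$, I would write
\[
\left[x^k\varphi(x)\right]^{(q)} = \sum_{j=0}^{\min(k,q)} \binom{q}{j}\frac{k!}{(k-j)!}\, x^{k-j}\varphi^{(q-j)}(x),
\]
and apply the hypothesis in the form $\left|x^{k-j}\varphi^{(q-j)}(x)\right| \le C A^{k-j}B^{q-j}m_{k-j,q-j}$. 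After enlarging $A,B$ we may assume $A,B\ge 1$, so that $A^{k-j}\le A^k$ and $B^{q-j}\le B^q$; the whole estimate then reduces to controlling the combinatorial weights $\binom{q}{j}\frac{k!}{(k-j)!}\,m_{k-j,q-j}$.

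The heart of the argument is the cancellation between these weights and the iterated form of \eqref{UslovnaNiz}. Rewriting \eqref{UslovnaNiz} as $m_{k-1,q-1}\le \frac{\gamma (k+q)^\theta}{kq}\,m_{k,q}$ and iterating $j$ times (legitimate because $0\le j\le\min(k,q)$ keeps all indices nonnegative) gives
\[
m_{k-j,q-j}\le \gamma^{j}\left(\prod_{i=0}^{j-1}\frac{(k+q-2i)^\theta}{(k-i)(q-i)}\right) m_{k,q}.
\]
On the other hand $\binom{q}{j}\frac{k!}{(k-j)!} = \frac{1}{j!}\prod_{i=0}^{j-1}(k-i)(q-i)$, so the products $\prod(k-i)(q-i)$ cancel exactly, leaving the clean factor $\frac{\gamma^j}{j!}\prod_{i=0}^{j-1}(k+q-2i)^\theta\,m_{k,q}$. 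Since each base satisfies $2\le k+q-2i\le k+q$, the product is at most $(k+q)^{\theta j}$ when $\theta\ge 0$ (and at most $1$ when $\theta<0$).

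Summing over $j$ then yields
\[
\left|\left[x^k\varphi(x)\right]^{(q)}\right| \le C A^k B^q m_{k,q}\sum_{j=0}^{\min(k,q)}\frac{\left(\gamma (k+q)^\theta\right)^j}{j!} \le C A^k B^q m_{k,q}\, e^{\gamma (k+q)^\theta}.
\]
Here the hypothesis $\theta\le 1$ is decisive: it guarantees $(k+q)^\theta\le k+q$ for $k+q\ge 1$, hence $e^{\gamma(k+q)^\theta}\le (e^\gamma)^k(e^\gamma)^q$, so the whole subexponential factor is absorbed into geometric constants. This gives the claim with $A_1 = A e^\gamma$, $B_1 = B e^\gamma$, and $C_1 = C$ (the case $k=q=0$ contributing only the $j=0$ term, hence trivial).

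I expect the main obstacle to be purely organizational rather than conceptual: correctly matching the falling-factorial weights $\frac{1}{j!}\prod_{i=0}^{j-1}(k-i)(q-i)$ produced by Leibniz against the denominators generated by iterating \eqref{UslovnaNiz}, so that the cancellation is exact and only the harmless factor $\prod_{i=0}^{j-1}(k+q-2i)^\theta$ survives. Once this bookkeeping is in place, recognizing the resulting series as a partial sum of $e^{\gamma(k+q)^\theta}$ and observing that $\theta\le 1$ is precisely the condition allowing this term to be swallowed into $A_1^k B_1^q$ is immediate.
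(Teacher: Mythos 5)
Your proof is correct, and it follows essentially the same route as the source the paper relies on: the paper does not prove this lemma itself but cites it as \cite[Lemma 1, p.~198]{GelfandShilov}, where the argument is exactly this Leibniz expansion with term-by-term use of the iterated ratio condition. Your bookkeeping is sound at every step --- the exact cancellation of $\frac{1}{j!}\prod_{i=0}^{j-1}(k-i)(q-i)$ against the iterated denominators, the bound $2\le k+q-2i\le k+q$, and the absorption of $e^{\gamma(k+q)^\theta}$ into $A_1^kB_1^q$ via $\theta\le 1$ --- so your write-up in fact supplies the self-contained proof that the paper delegates to the citation.
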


\begin{rem}
\label{RemarkNejednakost}
Let $m^{s,\t,\s}_{k,q}=k!^s q^{\t q ^{\s}}$, $0<s\leq 1$, $\t>0$, $\s>1$. Then \eqref{NizOcena} implies
$$
k q \frac{m^{s,\t,\s}_{k-1, q-1}}{m^{s,\t,\s}_{k ,q}} =k^{1-s} q \frac{M^{\t,\s}_{q-1}}{M^{\t,\s}_{q}}\leq k^{1-s}\frac{q}{(2q)^{\tau (q-1)^{\s-1}}}\leq C (k+q)^{1-s},\quad k,q\in \mathbb N _0,
$$
for some $C>1$, and hence $m^{s,\t,\s}_{k,q}$ satisfies \eqref{UslovnaNiz}.
\end{rem}

Now we are ready to prove the decay estimates of orthonormal wavelets and their derivatives. Although  \eqref{NejednakostGlavna} follows from \eqref{Eq:more-precise} we decided to present both proofs since
we use different arguments there. In fact, the estimates in \eqref{NejednakostGlavna} are in the spirit of
\cite{Hernandez} and \cite{Moritoh}, whereas  \eqref{Eq:more-precise} gives an explicit dependence
of the involved  constants on the order of derivatives of the wavelet $\psi$, underlying the strong ultra-analyticity
of $\psi $ which is not considered in  \cite{Hernandez, Moritoh}.

\begin{te} \label{Th:first-estimate}
Let $\s>1$, and let $\psi \in C^\infty ({\mathbb R})$  be an  orthonormal wavelet as in Lemma \ref{Prop:F-of-psi},
so that $\widehat{\psi} \in  \mathcal{D_{ \sigma}}({\mathbb R})$  is given by  \eqref{psi}. Then $\psi$ together with its derivatives satisfies the following estimates:
\begin{itemize}
\item[\emph{i)}] for every $n\in {\mathbb N _0}$ and $h>0$ there exists $C>0$ such that
\be
\label{NejednakostGlavna}
|\psi^{(n)}(x)|  \leq C  e^{-h \left (
 \ln^{\s}(|x|)  W^{-1} (\ln(|x|)) \right )^{1/(\s - 1)} }, \, \quad |x| \; \text{large enough},
\ee
where $W $ is the Lambert function.
\item[\emph{ii)}]
for every $h>0$ there exists $C>0$ such that
\be \label{Eq:more-precise}
|\psi^{(n)}(x)|  \leq C^{n+1} \,n!^s  e^{-h\left (
 \ln^{\s}(|x|)  W^{-1} (\ln(|x|)) \right )^{1/(\s - 1)})},
\ee
for every $0<s\leq 1$,  $n\in {\mathbb N _0}$, and large enough $ |x| $. Here $W $ denotes the Lambert function.
\end{itemize}
\end{te}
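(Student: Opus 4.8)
The plan is to convert the extended Gevrey regularity of $\widehat{\psi}\in\mathcal{D}_{\s}({\mathbb R})$ into decay of $\psi$ and its derivatives through Fourier inversion, handling the two parts by genuinely different routes. For \emph{i)} I would apply the Paley--Wiener estimate of Proposition \ref{NASPW} not to $\widehat{\psi}$ itself but to $g_n(\xi):=\xi^n\widehat{\psi}(\xi)$. Since $\xi^n$ is entire, hence an element of every $\mathcal{E}_{\t,\s}({\mathbb R})$, and these classes are closed under pointwise multiplication (Proposition \ref{OsobineKlasa} \emph{i)}), the function $g_n$ is compactly supported and lies in $\mathcal{D}_{\s}({\mathbb R})$. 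Proposition \ref{NASPW} then yields, for every $h>0$, a bound $|\widehat{g_n}(\eta)|\leq C_h\,e^{-h\,T_{\s}(|\eta|)}$, where $T_{\s}(|\eta|)=\bigl(\ln^{\s}(|\eta|)\,W^{-1}(\ln|\eta|)\bigr)^{1/(\s-1)}$ matches the exponent in \eqref{NejednakostGlavna}. Because $\widehat{\psi^{(n)}}(\xi)=(-i\xi)^n\widehat\psi(\xi)$, Fourier inversion identifies $\widehat{g_n}$ with $\psi^{(n)}$ up to a reflection and a unimodular constant, so $|\psi^{(n)}(x)|\leq C\,e^{-h\,T_{\s}(|x|)}$ for large $|x|$, with $C$ allowed to depend on $n$. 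This settles \emph{i)}.

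For \emph{ii)} the point is to make the $n$-dependence explicit, so I would keep the order of differentiation and the polynomial weight separated throughout. Integrating by parts $k$ times in $\psi^{(n)}(x)=\frac{(-i)^n}{2\pi}\int_K\xi^n\widehat\psi(\xi)\,e^{-ix\xi}\,d\xi$, with $K=\supp\widehat\psi$, gives the pointwise control
\[
|x^k\psi^{(n)}(x)|\leq \frac{|K|}{2\pi}\,\sup_{\xi}\bigl|[\xi^n\widehat\psi(\xi)]^{(k)}\bigr|,\qquad k\in\mathbb{N}_0 .
\]
The key step is to bound the right-hand side by a product-type estimate via the Gelfand--Shilov Lemma \ref{LemaNejednakost} applied with the two-index sequence $m^{s,\t,\s}_{n,k}=n!^{s}\,k^{\t k^{\s}}$ (power index $n$, derivative index $k$). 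Its hypothesis holds because $\widehat\psi\in\mathcal{E}_{\t,\s}({\mathbb R})$ supplies $|\widehat\psi^{(k)}(\xi)|\leq C\,M_k^{\t,\s}$ (after absorbing the factor $C^{k^{\s}}$ into a slightly larger $\t$ and a constant), while compactness of $K$ gives $|\xi^n|\leq R^n$, so $|\xi^n\widehat\psi^{(k)}(\xi)|\leq C R^n M_k^{\t,\s}\leq C R^n n!^{s}k^{\t k^{\s}}$; moreover $m^{s,\t,\s}_{n,k}$ satisfies the admissibility condition \eqref{UslovnaNiz} with $\theta=1-s\leq 1$, exactly as verified in Remark \ref{RemarkNejednakost}. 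The lemma then produces $|x^k\psi^{(n)}(x)|\leq C_1 A_1^{\,n}B_1^{\,k}\,n!^{s}\,k^{\t k^{\s}}$ for every $k$.

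Finally I would extract the decay by optimizing in $k$. Dividing by $|x|^k$ and taking the infimum turns $B_1^{\,k}k^{\t k^{\s}}/|x|^k$ into $e^{-T_{\t,\s}(|x|/B_1)}$ by the very definition \eqref{asocirana} of the associated function, whence $|\psi^{(n)}(x)|\leq C_1 A_1^{\,n}\,n!^{s}\,e^{-T_{\t,\s}(|x|/B_1)}$. The lower estimate in \eqref{KonacnaocenaAsocirana}, together with $T_{\s}(|x|/B_1)\sim T_{\s}(|x|)$ (because $\ln(|x|/B_1)\sim\ln|x|$ and $W(C\,\cdot)\sim W(\cdot)$), yields $T_{\t,\s}(|x|/B_1)\geq c_{\s}\,\t^{-1/(\s-1)}T_{\s}(|x|)$ for large $|x|$ with some $c_{\s}>0$; choosing $\t=(c_{\s}/h)^{\s-1}$ makes the exponent exactly $-h\,T_{\s}(|x|)$, and collecting $C_1 A_1^{\,n}$ into a single constant $C^{n+1}$ gives \eqref{Eq:more-precise}. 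The main obstacle is the middle step: one must run the Gelfand--Shilov lemma with the genuinely mixed sequence $n!^{s}k^{\t k^{\s}}$ so that the factorial stays tied to the polynomial weight while the $k^{\t k^{\s}}$ factor alone generates the Lambert-type decay, and one must carry the harmless multiplicative shift $|x|\mapsto|x|/B_1$ through the associated function without corrupting the constant in front of $T_{\s}$.
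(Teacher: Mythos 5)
Your proposal is correct and follows essentially the same route as the paper's own proof: part \emph{i)} reduces to showing $\xi^n\widehat{\psi}\in\mathcal{D}_{\sigma}({\mathbb R})$ (the paper does this by an explicit Leibniz computation, you by citing closure under multiplication) and then invoking Proposition \ref{NASPW}, while part \emph{ii)} runs the Gelfand--Shilov Lemma \ref{LemaNejednakost} with the mixed sequence $n!^{s}k^{\tau k^{\sigma}}$ of Remark \ref{RemarkNejednakost}, passes through $|x^k\psi^{(n)}(x)|$ via Fourier inversion on the compact support, optimizes in $k$ through the associated function \eqref{asocirana}, and fixes $\tau\asymp h^{1-\sigma}$ via \eqref{KonacnaocenaAsocirana}. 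The only deviation is cosmetic: you absorb the factor $B_1^{k}$ through the shift $|x|\mapsto|x|/B_1$ and the equivalence $T_{\sigma}(|x|/B_1)\asymp T_{\sigma}(|x|)$, whereas the paper absorbs it into the exponent by trading $\tau/2$ for $\tau$ at the cost of the constant $e^{T_{\tau/2,\sigma}(B_1)}$.
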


\begin{proof}
\emph{i)} Let $h>0$ be given and let $n=0$. Since
$\widehat{\psi} (\xi) = e^{i \frac{\xi}{2}} b_a (\xi)$, $\xi \in {\mathbb R}$,
where $b_a \in  \, \mathcal{D_{ \sigma}}({\mathbb R}) $ is even extension of \eqref{ba}, we can apply
\eqref{PWsaT} to obtain
\begin{align*}
|\psi (x)| = C \left|\widehat{b}_a \left(x+ \frac{1}{2}\right)\right| \leq C_{h } e^{-\frac{h}{A} T_{ \sigma} (\left| x+\frac{1}{2} \right|)}\leq C_{h } e^{-hT_{ \sigma} (\left| x \right|)}, \quad x\in \mathbb{R}.
\end{align*}
where we used \eqref{AlmostIncreasing}  for the last inequality,
and replaced $h$ by $\displaystyle \frac{h}{A}$ for suitable $A>0$. Now \eqref{NejednakostGlavna} with $n=0$ follows from
the left-hand side of \eqref{KonacnaocenaAsocirana}, cf. Proposition \ref{NASPW}.

Next we estimate the derivatives of $\psi$.
Since
$$| \psi ^{(n)} (x)| = C |(\xi^n e^{i\frac{\xi}{2}} b_a (\xi))\hat{} \, (x)| =
C |(\xi^n g (\xi))\hat{} \, (x)|,
$$
and $ g(\xi) := e^{i \frac{\xi}{2}} b_a (\xi) \in \mathcal{D}_{\sigma} ({\mathbb R})$, see the proof of Lemma \ref{Prop:F-of-psi},
it is sufficient to prove that $\xi^n g (\xi) \in \mathcal{D}_{\sigma}({\mathbb R}) $ for every $n\in\mathbb{N}_0$.

The function $g (\xi)$ is compactly supported, so the same holds for  $\xi^n g (\xi)$. Let $K$ denote the support of
$\xi^n g (\xi)$. Then  we put  $\max_K |\xi|^n = a_n$, $n\in\mathbb{N}_0$, and by the Leibnitz formula we get
\begin{align*}
|(\xi^n g (\xi))^{(p)}| &= \left| \sum_{k=0}^p \binom{p}{k} \frac{d^k}{dx^k} (\xi^n) g^{(p-k)} (\xi)  \right|\\
&\leq \sum_{k=0}^p \binom{p}{k} n (n-1) \cdots (n-(k-1)) |\xi|^{n-k} \, | g^{(p-k)} (\xi) |\\
&\leq \sum_{k=0}^p \binom{p}{k} n (n-1) \cdots (n-(k-1)) a_{n-k} \, C^{(p-k)^{\sigma}} (p-k)^{\tau (p-k)^{\sigma}}\\
&\leq A p^{\tau p^{\sigma}} C^{p^{\sigma}} \sum_{k=0}^p \binom{p}{k} n^k
 = C_1 ^{p^{\sigma}} p^{\tau p^{\sigma}},
\end{align*}
where $A = \max\{ a_n, \dots, a_{n-p} \}$, and $C_1 = 2nAC$.

Thus  $\xi^n e^{i\frac{\xi}{2}} b_a (\xi) \in\mathcal{D}_{\sigma}({\mathbb R})$
for every   $n\in {\mathbb N _0}$, and the estimate  \eqref{NejednakostGlavna}
follows from Proposition \ref{NASPW}.

\emph{ii)}
Take arbitrary $\t>0$ and fix $0<s\leq 1$. Since ${\widehat \psi}\in \D_{\frac{\t}{4},\s}({\mathbb R})$, for any $m,n\in {\mathbb N _0}$ we have
\be \label{eq:procena01}
\frac{|\xi^{n}\widehat{\psi}^{(m)}(\xi)|}{ n!^s \, m^{\frac{\t}{2}  m^{\s}}}\leq B_{\t}\sup_{m,n\in{\mathbb N _0}} \frac{A^{n} }{n!^s}\frac{B_\t^{m^{\s}} }{m^{\frac{\t}{4} m^{\s}}}\leq C_{\t}, \quad \xi \in \supp \widehat {\psi},
\ee
since $\dss \sup_{m\in \mathbb{N}_0} \frac{B_\t^{m^{\s}} }{m^{\frac{\t}{4} m^{\s}}}$ is bounded by a constant which depends on $\t$, cf. \cite[Lemma 2.3]{PTT-01}.
Thus \eqref{eq:procena01} gives
\be
|\xi^{n}\widehat{\psi}^{(m)}(\xi)|\leq C_{\t} n!^s  m^{\frac{\t}{2}  m^{\s}},\quad \xi \in K,
\ee
and from Lemma \ref{LemaNejednakost} (see also Remark \ref{RemarkNejednakost}) it follows that
$$
|(\xi^{n}\widehat{\psi}(\xi))^{(m)}|\leq C'_{\t} A_1^n B_1^m n!^s m^{\frac{\t}{2}  m^{\s}},\quad \xi\in K,
$$
for suitable $C'_{\t}, A_1, B_1>0$.Thus
\begin{multline}
\label{RacunGlavna2}
|x^{m}{\psi}^{(n)}(x)|=
{|{\mathcal F}^{-1}((\xi^{n}\widehat{\psi}(\xi))^{(m)})(x)|}\\
\leq  \int_{K}|(\xi^{n}\widehat{\psi}(\xi))^{(m)}|d\xi\leq C''_{\t}  A_1^n B_1^m n!^s m^{\frac{\t}{2}  m^{\s}}, \quad m,n\in {\mathbb N _0},\quad x\in {\mathbb R}.
\end{multline}
Next we notice that
\be
B_1^m  m^{\frac{\t}{2}  m^{\s}}=\frac{B_1^m}{m^{\frac{\t}{2}  m^{\s}}} m^{\t  m^{\s}}\leq C\,  m^{\t  m^{\s}}, \quad m,n\in {\mathbb N _0}, \nonumber
\ee
with $\dss C=\sup_{m_1\in\N_0}\frac{B_1^{m_1}}{m_1^{\frac{\t}{2}  m_1^{\s}}}=e^{T_{\t/2,\s}(B_1)}$,
that is
\be \label{eq:procena02}
B_1^m  m^{\frac{\t}{2}  m^{\s}}\leq e^{T_{\frac{\t}{2},\s} (B_1) }
\,   m^{\t  m^{\s}}, \quad m,n\in {\mathbb N _0}.
\ee

Now, from  \eqref{RacunGlavna2}, \eqref{eq:procena02}, \eqref{asocirana},
and  \eqref{AsociranaSigma} it follows that there exists $C_{\t} ''' > 0$ such that
\begin{multline}
\label{eq:procena03}
|{\psi}^{(n)}(x)|\leq (C''' _{\t})^{ (n+1)} n!^s\inf_{m\in {\mathbb N _0}}\frac{m^{\t m^{\s}}}{|x|^m} \\
= (C''' _{\t} )^{(n+1)}n!^s e^{-T_{\t,\s}(|x|)} \leq C^{n+1}\, n!^s e^{-\t^{-\frac{1}{\s-1}} T_{\s}(|x|)},
\end{multline}
for any $n\in {\mathbb N _0}$, $|x|$ large enough, and for a suitable $C>0$.

Now for any given $h>0$ we can choose $\t$ so that  $\t  \asymp h^{{1-\s}}$, and
then \eqref{Eq:more-precise} follows from \eqref{eq:procena03} and the left-hand side of \eqref{TeoremaAsocirana},
which completes the proof.
\end{proof}

We finally remark that the Dziuba\'nski-Hern\'andez construction of ONW
is recently reconsidered in the framework of Gelfand-Shilov spaces in  \cite{Rakic}.
Then the Gevrey type regularity of ONW is observed both in the Fourier transform (frequency) domain and in the
configuration  (space) domain.

\section*{Acknowledgements}
This research was  supported by the Science Fund of the Republic of
Serbia, $\#$GRANT No 2727, \emph{Global and local analysis of operators and
distributions} - GOALS. N. Teofanov and S. Tuti\'c gratefully acknowledge the financial support of the Ministry of Science, Technological Development and Innovation of the Republic of Serbia (Grants No. 451-03-66/2024-03/ 200125 $\& $ 451-03-65/2024-03/200125).

%
%

\end{document}